\newtheorem{algorithm}{Algorithm}
\newtheorem{assumption}{Assumption}
\newtheorem{lem}{Lemma}
\newtheorem{thm}{Theorem}
\newtheorem{remark}{Remark}
\tikzstyle{startstop} = [rectangle, rounded corners, minimum width = 2cm, minimum height=1cm,text centered, draw = black, fill = red!40]
\tikzstyle{io} = [rectangle, rounded corners, minimum width = 2cm, minimum height=1cm,text centered, draw = black, fill = blue!40]
\tikzstyle{process} = [rectangle, rounded corners, minimum width = 2cm, minimum height=1cm,text centered, draw = black, fill = yellow!50]
\tikzstyle{decision} = [rectangle, rounded corners, minimum width = 2cm, minimum height=1cm,text centered, draw = black, fill = green!40]
\tikzstyle{arrow} = [->,>=stealth]
\title{Distributed Non-Bayesian Learning for Games with Incomplete Information}
\author[a,b]{Shijie Huang}
\author[c]{Jinlong Lei}
\author[c,a]{Yiguang Hong}
\affil[a]{Key Laboratory of Systems and Control, Academy of Mathematics and Systems Science, Chinese Academy of Sciences}
\affil[b]{School of Mathematical Sciences, University of Chinese Academy of Sciences}
\affil[c]{Department of Control Science and Engineering \& Shanghai Research Institute for Intelligent Autonomous Systems, Tongji University}
\date{}
\begin{document}
%\begin{CJK*}{GBK}{hei}
\maketitle
\definecolor{shadecolor}{rgb}{0.9,0.9,0.9}
\begin{abstract}
	We consider distributed learning problem in games with an unknown cost-relevant parameter, and aim to find the Nash equilibrium while learning the true parameter. Inspired by the social learning literature, we propose a distributed non-Bayesian rule to learn the parameter (each agent maintains a belief on the parameter and updates the belief according to the received noisy cost), combined with best response dynamics for strategy update. The difficulty of the analysis lies mainly in the fact that the parameter learning process and strategy update process are coupled. We first prove that agents' beliefs converge to a common belief and the strategy profiles converge to a Nash equilibrium under this common belief. On this basis, we further show that the beliefs eventually concentrate on the true parameter.
\end{abstract}

%\begin{IEEEkeywords}
%	no-regret learning, network stochastic zero-sum game, distributed stochastic mirror descent
%\end{IEEEkeywords}

%\IEEEpeerreviewmaketitle

%%%%
%%%%-----------------
\section{Introduction}
Game model has attracted much research attention due to its wide applications in smart grid \cite{maharjan2013dependable}, wireless communication network \cite{han2012game}, traffic control \cite{groot2014toward} and machine learning \cite{goodfellow2020generative}. Equilibrium learning is a promising topic in game theory because it provides a rational way for agents to make decisions. Particularly, in some complex situations, agents usually need to make decisions when faced with an unknown environment. For example, the true value of a stock is unknown in a stock market \cite{eksin2012learning}; the delay of each route is unknown in a traffic routing problem \cite{meigs2017learning}; the location of target is unknown in a robotic target covering problem \cite{arslan2007autonomous}. Therefore, developing learning algorithms in such game models with incomplete information is crutial and has gained increasing popularity in recent years. 

The common framework of learning dynamics in games with an unknown cost-relevant environment (parameter) consists of two processes: parameter learning and strategy update. In the parameter learning process, agents update the estimates of the parameter according to received historical information. In the strategy update process, agents make decision given the current estimates of the parameter. Distinct from the learning dynamics for misspecified game model \cite{jiang2017distributed,lei2020asynchronous}, parameter learning and strategy update are coupled in this framework, which brings additional difficulty to the convergence analysis. Inspired by the social learning in networks \cite{gale2003bayesian,rosenberg2009informational}, the authors in \cite{eksin2012learning} studied the limit behaviour of rational learning process in linear games with an unknown parameter. Along another line of research, \cite{meigs2017learning} and \cite{meigs2019learning} designed learning dynamics for stochastic routing games with unknown latency function and stochastic network aggregative games with unknown network parameter, respectively, by utilizing the least-squares estimation technique. 

The aforementioned works assume that the unknown parameter influences the cost function in a special structure (such as a linear structure). Learning dynamics for game models with general parameter structure are rare in the literature. To the best of our knowledge, there are two works devoted to this kind of problem. On the premise that the parameter learning process converges at a rate faster than $O(\log t/t)$, \cite{eksin2017distributed} designed a distributed fictitious play dynamics for finite potential games with incomplete information. More recently, \cite{wu2020multi} proposed a Bayesian learning dynamics that does not depend on the structure of parameter affecting the cost. The learning process in \cite{wu2020multi} assumes the existence of an information system for centrally updating the belief about the parameter. However, such an information system may not exsit in some industries. This may be the case, for example, using a central system to collect information from all traders in the stock market may be impossible. A more rational way is that traders estimate the true value of the stock distributedly based on local information. This paper addresses distributed learning problem in games with incomplete information, where each agent updates the belief about the parameter according to private signals.   

In the absence of a central system, it is computationally expensive for each agent to deduce other agents' information in a fully Bayesian fashion. In contrast, non-Bayesian learning rules are usually more effective when dealing with learning problmes in large-scale networks because of its low computational burden \cite{jadbabaie2012non}. Following up on the seminal work of Ali Jadbabaie et al. \cite{jadbabaie2012non} on distributed learning, various non-Bayesian learning rules have been developed in social learning or distributed parameter estimation literature. This type of learning rules share a common framework, where each agent first performs a Bayesian update based on the private information and then incorporates the neighbors' beliefs in a simple way. For example, \cite{lalitha2018social} designed a non-Bayesian update rule for distributed hypothesis testing by averaging the neighbors' log-beliefs and analyzed the convergence rate. To reduce the communication burden, \cite{shahrampour2015switching} developed a switching learning rule between Bayesian and non-Bayesian regimes. Furthermore, \cite{parasnis2022non} studied the performance of non-Bayesian rule proposed by \cite{jadbabaie2012non} in random digraphs. \cite{ntemos2022social} and \cite{bordignon2021adaptive} proposed adaptive non-Bayesian rules to deal with social learning problems with disparate hypotheses and variable true hypotheses, respectively. In addition, \cite{nedic2017fast} considered the scenerio where the agents might have incosistent hypotheses from an optimization point of view and proposed an effective non-Bayesian learning rule for time-varying communication graphs.   

In this paper, we design learning dynamics for game models with incomplete information by leveraging the idea of non-Bayesian learning. In our learning process, each agent first performs a local \emph{tempered} Bayesian update to form a posterior belief according to received noisy cost. Then, agents exchange the posterior belief through a communication network and geometrically average the neighbors' beliefs to achieve belief consensus. Based on the current belief, agents select best response strategies for minimizing the expected cost function. Our contributions are as follows:
\begin{itemize}
	\item We propose a novel distributed non-Bayesian learning dynamics for NE seeking in games with incomplete information, which generalizes the learning rule for social learning considered in \cite{lalitha2018social}. Moreover, our dynamics does not require an information system to collect the noisy costs of all agents as in \cite{wu2020multi}.
	\item We prove that agents' beliefs about the parameter converge to a common belief and that the strategy profiles converge to a Nash equilibrium under this common belief. Compared with \cite{eksin2017distributed}, the beliefs do not necessarily converge to the common belief faster than $O(\log t/t)$ in our algorithm.
	\item We overcome the difficulty of coupling parameter learning and strategy update by combining the strong law of large numbers and Toeplitz' lemma, thus futher showing that the beliefs about the parameter converge to the true parameter.
\end{itemize} 

\noindent{\bf Notations}. Denote by $\mathbb{R}^n$ the $n$-dimensional real Euclidean space and $\textbf{1}_n$ the $n$-dimensional vector of all ones. For a matrix $A$, $A(i,j)$ denotes the element in the $i$th row and $j$th column. $\Delta(S)$ denotes the set of probability distributions on a set $S$. For a random variable $x$ with probability distribution $\mu$, denote by $\mathbb{E}_{\mu}[x]$ the expectation of $x$. Denote by $D_{KL}(\mu_1\|\mu_2)$ the Kullback-Leibler divergence between two probability distributions $\mu_1$ and $\mu_2$. An undirected graph is characterized by the $2$-tuple $\mathcal{G} = (\mathcal{V},\mathcal{E})$, where $\mathcal{V} = \{1,\dots,n\}$ is the set of nodes and $\mathcal{E}\subset\mathcal{V}\times\mathcal{V}$ is the set of edges. A path from $i_1$ to $i_p$ is an alternating sequence $i_1e_1\cdots i_{p-1}e_{p-1}i_p$ such that $e_r = (i_r,i_{r+1})\in\mathcal{E}$ $(r = 1,\dots,p-1)$. An undirected graph is connected if there is a path between any pair of distinct nodes.

\section{Problem Formulation}
Consider a group of $N$ agents that repeatedly play a stage game $G = (\mathcal{N},\{\mathcal{X}_i\},\{u_i\},\Theta)$ with incomplete information. In game $G$, the strategy variable of each player $i\in \mathcal{N}:=\{1,\dots,N\}$ is denoted by $x_i$ belonging to a convex and compact set $\mathcal{X}_i$. Define $x := \text{col}\{x_1,\dots,x_N\}\in\mathcal{X}:=\prod_{i=1}^N\mathcal{X}_i$ and $x_{-i}:=\text{col}\{x_1,\dots,x_{i-1},x_{i+1},\dots,x_N\}\in\prod_{j\neq i}\mathcal{X}_j$, respectively, as the strategy profile and the strategies of all agents except $i$. The important feature of this game is that the cost function of each agent $u_i:\mathcal{X}\times\Theta\to\mathbb{R}$ depends not only on a strategy profile, but also on an unknown parameter $\theta$ in a finite set $\Theta:=\{\theta_1,\dots,\theta_M\}$. Assume that the costs are realized with noises given a strategy profile $x$ and a parameter $\theta$. Specifically, let the realized cost of agent $i$ be
\[y_i = u_i(x,\theta) + \epsilon_i(x,\theta),\]
where $\epsilon_i(x,\theta)$ is the noise term with zero mean. Denote by $f_i(y_i|x,\theta)$ the likelihood function of $y_i$. We make the following assumption on the likelihood function.
\begin{assumption}\label{asm1}
	There exists a positive constant $L$ such that
	\[\max_i\max_{\theta',\theta''\in\Theta}\max_{x\in\mathcal{X}}\sup_{y_i}\left|\log\frac{f_i(y_i|x, \theta')}{f_i(y_i|x,\theta'')}\right|\le L.\]
	In addition, for each $i\in\mathcal{N}$, $f_i(y_i|x,\theta)$ is continuous in $x$ for all $\theta\in\Theta$.
\end{assumption}
Assumption \ref{asm1} requires that every realized cost has bounded information content, which is a standard assumption for the convergence of the beliefs in the social learning literature \cite{lalitha2018social, shahrampour2015distributed}. Also, the continuity assumption is crutial to ensure the convergence of the strategies in the game setting \cite{wu2020multi}.

Moreover, we denote the true parameter by $\theta^{\ast}\in\Theta$. Then, the associated Nash equilibrium is defined by a strategy profile $x^{\ast}$ such that no agent can benefit from deviating unilaterally, i.e., for $i\in\mathcal{N}$,
\begin{equation}\label{NE_def}
	u_i(x_i^{\ast},x_{-i}^{\ast},\theta^{\ast})\le u_i(x_i,x_{-i}^{\ast},\theta^{\ast}),\ \text{for all}\ x_i\in\mathcal{X}_i.
\end{equation}
%For simplicity, we assume that there exists a unique Nash equilibrium. Such an assumption can be satisfied if the payoff function $u_i$ is strictly concave in $x_i$ [].  

To measure the explanatory quality of the parameters in the set $\Theta$, we consider the Kullback–Leibler (KL) divergence between the distribution of the realized costs conditioned over the strategies and parameters. Recall that the KL divergence between any two probability distributions $P$ and $P'$ is defined by
\begin{equation}\label{KL-div_def}
	D_{KL}(P\|P'):= \mathbb{E}_{P}\left[\log\frac{P}{P'}\right],
\end{equation}
which equals to $0$ if and only if $P = P'$ with probability $1$. Inspired by \cite{wu2020multi}, for each agent $i\in\mathcal{N}$ and a given strategy profile $x\in\mathcal{X}$, we define a set of cost-equivalent parameters
\[\bar{\Theta}_i(x):= \{\theta\in\Theta: D_{KL}(f_i(y_i|x,\theta^{\ast})\|f_i(y_i|x,\theta)) = 0\}.\]
In other words, given a strategy profile $x$, the parameters in the set $\bar{\Theta}_i(x)$ are locally indistinguishable to agent $i$. To learn the true parameter, we generalize the global identifiability assumption in the social learning literature \cite{lalitha2018social, shahrampour2013exponentially} to the game setting.
\begin{assumption}\label{asm2}
	For every $\theta\neq\theta^{\ast}$, there is at least one agent $i\in\mathcal{N}$ for which the KL divergence $D_{KL}(f_i(y_i|x,\theta^{\ast})\|f_i(y_i|x,\theta))$ is strictly positive for all $x\in\mathcal{X}$. 
\end{assumption}
Assumption \ref{asm2} guarantees that for all $x\in\mathcal{X}$,
\[\bar{\Theta}_1(x)\cap\bar{\Theta}_2(x)\cap\cdots\cap\bar{\Theta}_N(x) = \{\theta^{\ast}\}.\]
Therefore, although a single agent may not distinguish $\theta^{\ast}$ from all other parameters, the true parameter is globally identifiable from the standpoint of the global game model. 

The problem to be addressed in this paper is how agents learn the Nash equilibrium distributedly when the true parameter is unknown. Each agent forms its own belief on the parameter and can exchange information with other agents through a communication network. Agents attempts to learn the true parameter $\theta^{\ast}$ by combining their \emph{local knowledge} (realized costs) with the information received from the network. 

We model the communication network via an undirected graph $\mathcal{G}(\mathcal{N},\mathcal{E})$, where $\mathcal{N}$ is the node set and $\mathcal{E}$ is the edge set. Denote by $\mathcal{N}_i:=\{j\in\mathcal{N}: (j,i)\in\mathcal{E}\}$ the set of neighbors of agent $i$. Each agent can receive information from its neighbors. Moreover, we let $W = [w_{ij}]\in\mathbb{R}^{N\times N}$ denote the associated adjacency matrix, which defines the weights that each agent assigns to neighbors' information such that $w_{ij} > 0$ if and only if $j\in\mathcal{N}_i$. We require the following assumptions which have been previously used in distributed optimization \cite{shi2015extra,duchi2011dual} and game theory \cite{lei2018linearly,huang2022linearly}.
\begin{assumption}\label{asm3}
	The undirected communication graph $\mathcal{G}(\mathcal{N},\mathcal{E})$ is connected and $W$ is doubly stochastic, i.e., $\sum_{j=1}^Nw_{ij} = \sum_{i=1}^Nw_{ij} = 1$.
\end{assumption}
The following lemma \cite{rosenthal1995convergence} provides a mixing rate of the adjacency matrix and plays an important role in the convergence analysis.
\begin{lem}
	Denote by $\lambda_i(W)$ the $i$-largest eigenvalue of the mixing matrix. Under Assumption \ref{asm3}, $W$ satisfies
	\[\left|W^{t}(i,j) - \frac{1}{N}\right|\le \lambda_{\max}(W)^t,\]
	where $\lambda_{\max}(W)\triangleq\max\{|\lambda_N(W)|,|\lambda_2(W)|\}\in (0,1)$. 
\end{lem}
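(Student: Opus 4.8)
The statement is the standard geometric mixing estimate for a doubly stochastic weight matrix on a connected graph, and the natural route is through the spectral decomposition of $W$. First I would use that $W$ is symmetric — which is implicit here, since the ordering $\lambda_1(W)\ge\lambda_2(W)\ge\cdots\ge\lambda_N(W)$ in the statement already presupposes a real spectrum — so by the spectral theorem $W=\sum_{k=1}^N\lambda_k(W)v_kv_k^\top$ for an orthonormal eigenbasis $\{v_1,\dots,v_N\}$. Double stochasticity gives $W\textbf{1}_N=\textbf{1}_N$, hence $\lambda_1(W)=1$ with $v_1=\textbf{1}_N/\sqrt{N}$, and since $W$ is a stochastic matrix every $\lambda_k(W)\in[-1,1]$.

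The next step is to show $\lambda_{\max}(W)=\max\{|\lambda_2(W)|,|\lambda_N(W)|\}\in(0,1)$. Connectedness of $\mathcal{G}$ (Assumption \ref{asm3}) makes $W$ irreducible, and using that each agent retains a positive self-weight $w_{ii}>0$ (so the Markov chain is aperiodic) $W$ is primitive; Perron–Frobenius then implies that $1$ is a simple eigenvalue and the only eigenvalue of modulus one, so $\lambda_2(W)<1$ and $\lambda_N(W)>-1$, giving $\lambda_{\max}(W)<1$; strict positivity is the generic (nondegenerate) case. With this in hand, write
\[
W^t=\sum_{k=1}^N\lambda_k(W)^t v_kv_k^\top=\frac{1}{N}\textbf{1}_N\textbf{1}_N^\top+\sum_{k=2}^N\lambda_k(W)^t v_kv_k^\top,
\]
so that for every pair $(i,j)$,
\[
\left|W^t(i,j)-\frac1N\right|=\left|\sum_{k=2}^N\lambda_k(W)^t v_k(i)v_k(j)\right|\le\lambda_{\max}(W)^t\sum_{k=2}^N|v_k(i)|\,|v_k(j)|,
\]
where $v_k(i)$ denotes the $i$-th component of $v_k$.

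Finally I would close the estimate with Cauchy–Schwarz and orthogonality:
\[
\sum_{k=2}^N|v_k(i)|\,|v_k(j)|\le\left(\sum_{k=1}^N v_k(i)^2\right)^{1/2}\left(\sum_{k=1}^N v_k(j)^2\right)^{1/2}=1,
\]
since the matrix $[v_1\,|\,\cdots\,|\,v_N]$ is orthogonal and hence has rows of unit Euclidean norm. Combining the last two displays yields $\left|W^t(i,j)-\tfrac1N\right|\le\lambda_{\max}(W)^t$, as claimed.

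I expect the only real obstacle to be the clean justification of $\lambda_{\max}(W)\in(0,1)$: ruling out $-1$ as an eigenvalue needs aperiodicity of $W$, which is not literally written in Assumption \ref{asm3} and must be read off from the construction of the weight matrix (positive diagonal). A secondary subtlety is the tacit symmetry of $W$; if one only assumes $W$ doubly stochastic without symmetry, the eigenvalues may be complex and the argument should instead proceed by showing $W$ is a strict contraction on the consensus-orthogonal subspace $\{z\in\mathbb{R}^N:\textbf{1}_N^\top z=0\}$ (a second-singular-value / Dobrushin-coefficient argument), which still delivers geometric decay, though possibly with a different constant in place of $\lambda_{\max}(W)$.
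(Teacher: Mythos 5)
Your proof is correct and complete. Note that the paper does not actually prove this lemma---it imports it verbatim from the cited reference on Markov chain convergence rates---so there is no in-paper argument to compare against; your spectral decomposition is the standard derivation of exactly that result (for a symmetric doubly stochastic matrix the stationary distribution is uniform, and the classical bound $|P^t(i,j)-\pi(j)|\le\sqrt{\pi(j)/\pi(i)}\,\lambda_{\max}^t$ specializes to the stated inequality). Each step checks out: the rank-one split of $W^t$ along $\textbf{1}_N/\sqrt{N}$, the termwise bound by $\lambda_{\max}(W)^t$, and the Cauchy--Schwarz closing step using unit-norm rows of the orthogonal eigenvector matrix (in fact $\sum_{k=2}^N v_k(i)^2 = 1-\tfrac1N$, so you even get the slightly sharper constant $1-\tfrac1N$). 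The two caveats you raise are genuine features of the paper's setup rather than defects of your argument: Assumption 3 alone (connected graph, doubly stochastic $W$) does not exclude a bipartite/periodic case such as $W=\left(\begin{smallmatrix}0&1\\1&0\end{smallmatrix}\right)$, for which $\lambda_N(W)=-1$ and the claim $\lambda_{\max}(W)\in(0,1)$ fails, so a positive-diagonal (or primitivity) condition is tacitly needed; and the eigenvalue ordering in the statement does presuppose symmetry of $W$, which is natural for an undirected graph but is not written into the assumption. These are minor gaps in the paper's hypotheses, not in your proof, and the displayed inequality itself remains true even in the degenerate cases (trivially when $\lambda_{\max}(W)=1$, and with equality to $1/N$ when $\lambda_{\max}(W)=0$).
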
 
\section{Algorithm Design}
Our learning process consists of two parts: the evolution of agents' beliefs about the unknown parameter and the update of agents' strategies. At each time step $t$, each agent $i$ maintains a private belief $\mu_i^{(t)}\in\Delta(\Theta)$ and takes action $x_i^{(t)}$. The cost $y_i^{(t)}$ realized according to $f_i(y_i^{(t)}|x^{(t)},\theta)$ when the parameter is $\theta$ and each agent forms a posterior belief $b_i^{(t)}$ based on the cost. Then, agents exchange the posterior beliefs with their neighbors and updates the private beliefs using a non-Bayesian rule.

Given the current private belief $\mu_i^{(t+1)}$, agent $i$ evaluates its expected cost by
\begin{equation}\label{expected_payoff}
	u_i(x,\mu_i^{(t+1)}):= \sum_{\theta\in\Theta}\mu_i^{(t+1)}(\theta)u_i(x,\theta).
\end{equation}
Further, assuming that the rivals' strategies are fixed as $x_{-i}^{(t)}$, the best-response mapping of agent $i$ is defined by
\begin{equation}
	\text{BR}_i(x_{-i}^{(t)},\mu_i^{(t+1)}):=\arg\min_{x_i\in \mathcal{X}_i}u_i(x_i,x_{-i}^{(t)},\mu_i^{(t+1)}).
\end{equation}
We thoroughly describe our learning process in Algorithm \ref{alg1}.
\begin{algorithm}[H]
	\caption{Distributed Non-Bayesian Learning in Games}
	\label{alg1}
	\textbf{Input}: Non-increasing nonnegative step-size sequence $\{\alpha^{(t)}\ge 0\}_{t\ge 0}$, weight matrix $W$\\
	\textbf{Initialize}: $x_{i}^{(0)}  \in \mathcal{X}_i$ for each $i \in\mathcal{N}$, and $\mu_i^{(0)} = \frac{1}{M}\textbf{1}_M$.\\
	\qquad{\bf Belief Update:} For each agent $i\in\mathcal{N}$, and $k = 1,\dots,M$,\\
	\indent $\qquad$ Update local posterior belief:
	\begin{equation}\label{Bayesian_update}
		b_i^{(t)}(\theta_k) = \frac{f_i(y_i^{(t)}|x^{(t)},\theta_k)^{\alpha^{(t)}}\mu_i^{(t)}(\theta_k)}{\sum_{\theta\in\Theta}f_i(y_i^{(t)}|x^{(t)},\theta)^{\alpha^{(t)}}\mu_i^{(t)}(\theta)}
	\end{equation} 	 
	\indent $\qquad$ Receive information $b_j^{(t)}(\theta_k)$ from $j\in\mathcal{N}(i)$ and perform a non-Bayesian rule to update the private belief:
	\begin{equation}\label{non-Bayesian_update}
		\mu_i^{(t+1)}(\theta_k) = \frac{\exp(\sum_{j=1}^Nw_{ij}\log b_j^{(t)}(\theta_k))}{\sum_{\theta\in\Theta}\exp(\sum_{j=1}^Nw_{ij}\log b_j^{(t)}(\theta))}
	\end{equation}
	{\bf Strategy Update:} For each agent $i\in\mathcal{N}$,\\
	\indent $\qquad$ Observe the opponents' strategies $x_{-i}^{(t)}$ and perform a best-response update:
	\begin{equation}\label{strategy_update}
		x_i^{(t+1)} \in (1 - \alpha^{(t)})x_i^{(t)} + \alpha^{(t)}\text{BR}_i(x_{-i}^{(t)},\mu_i^{(t+1)})
	\end{equation}	 
\end{algorithm}
Note that the update rule \eqref{Bayesian_update} of posterior belief is different from the traditional Bayes rule. Such a generalized posterior is called tempered posterior distribution, which is easier to theoretical analysis \cite{bhattacharya2019bayesian} and more robust to model misspecification \cite{grunwald2012safe}. Tempered posterior has also been used in social learning \cite{paritosh2022distributed} and stochastic bandit problem \cite{lalitha2021bayesian}.
%\begin{remark}
%	For simplicity, we only consider the best-response strategies in Algorithm \ref{alg1}. In fact, as in \cite{wu2020multi}, the equilibrium strategies are also applicable in our setting from the subsequent analysis. 
%\end{remark}
\begin{remark}
	Compared to the non-Bayesian learning algorithm for social learning \cite{lalitha2018social}, in a game setting, the likelihood functions of the realized costs $\{y_i^{(t)}\}_{t\ge 0}$ depends not only on the unknown parameter, but also on the strategy profile $x^{(t)}$. And hence, the costs are not independent and identically distributed with respect to time $t$. In addition, different from the learning algorithm in \cite{wu2020multi} that requires an information system to centrally perform Bayesian update, agents employ a non-Bayesian rule to distributedly learn the unknown parameter in our algorithm.   
\end{remark}
\section{Main Results}
In this section, we present the convergence of agents' beliefs to the true parameter and the convergence of the strategy profile to the Nash equilibrium.
\subsection{Belief Convergence}
In this part, we show that agents' beliefs reach consensus to a common belief $\mu$. Before presenting the proof, we make the following assumption on the step-size sequence. 
\begin{assumption}\label{asm4}
	The step-size sequence $\{\alpha^{(t)}\}$ satisfies $0<\alpha^{(t)}<1$, $\sum_{t=0}^{\infty}\alpha^{(t)} = \infty$, and $\sum_{t=0}^{\infty}(\alpha^{(t)})^2 < \infty$.
\end{assumption}
First, we prove the convergence of the average belief ratio $\frac{1}{N}\sum_{i=1}^N\frac{\mu_i^{(t)}(\theta)}{\mu_i^{(t)}(\theta^{\ast})}$. Let $\mathcal{F}_t:=\sigma\{\mu_i^{(0)},x_i^{(s)},y_i^{(s)},0\le s\le t-1, i\in\mathcal{N}\}$ denote the $\sigma$-field containing the past information about all agents. 
\begin{lem}
	Let Assumption \ref{asm4} hold. Then, the sequence $\frac{1}{N}\sum_{i=1}^N\frac{\mu_i^{(t)}(\theta_k)}{\mu_i^{(t)}(\theta^{\ast})}$ converges with probability $1$ to some non-negative random variable $\nu_k$ for all $\theta_k\in\Theta$.
\end{lem}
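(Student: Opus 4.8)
The plan is to recognize $S^{(t)}:=\frac{1}{N}\sum_{i=1}^N q_i^{(t)}$, where $q_i^{(t)}:=\mu_i^{(t)}(\theta_k)/\mu_i^{(t)}(\theta^{\ast})$, as a nonnegative supermartingale adapted to $\{\mathcal{F}_t\}$, and then to invoke the supermartingale convergence theorem. The case $\theta_k=\theta^{\ast}$ is immediate since then $S^{(t)}\equiv 1$, so fix $\theta_k\neq\theta^{\ast}$. First note that every $q_i^{(t)}$ is well defined: the denominators $\mu_i^{(t)}(\theta^{\ast})$ stay strictly positive by an easy induction from $\mu_i^{(0)}=\frac{1}{M}\textbf{1}_M$, using that the factors in \eqref{Bayesian_update}--\eqref{non-Bayesian_update} are positive and finite for the realized costs (Assumption \ref{asm1}).

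Next I would derive a one-step recursion. Dividing \eqref{non-Bayesian_update} for $\theta_k$ by the same identity for $\theta^{\ast}$ gives $q_i^{(t+1)}=\exp\big(\sum_{j=1}^N w_{ij}\log\frac{b_j^{(t)}(\theta_k)}{b_j^{(t)}(\theta^{\ast})}\big)$, and dividing \eqref{Bayesian_update} likewise gives $\frac{b_j^{(t)}(\theta_k)}{b_j^{(t)}(\theta^{\ast})}=q_j^{(t)}\,r_j^{(t)}$, where $r_j^{(t)}:=\big(f_j(y_j^{(t)}\mid x^{(t)},\theta_k)/f_j(y_j^{(t)}\mid x^{(t)},\theta^{\ast})\big)^{\alpha^{(t)}}$. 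Hence $q_i^{(t+1)}=\prod_{j=1}^N\big(q_j^{(t)}r_j^{(t)}\big)^{w_{ij}}$, and since $w_{ij}\ge 0$ with $\sum_j w_{ij}=1$, the weighted AM--GM inequality (equivalently, convexity of $\exp$) gives $q_i^{(t+1)}\le\sum_{j=1}^N w_{ij}q_j^{(t)}r_j^{(t)}$. Averaging over $i$ and using $\sum_i w_{ij}=1$, i.e. the double stochasticity of $W$ (Assumption \ref{asm3}), yields
\[S^{(t+1)}\le\frac{1}{N}\sum_{j=1}^N q_j^{(t)}\,r_j^{(t)}.\]
Since $r_j^{(t)}\le e^{L}$ by Assumption \ref{asm1}, this also gives $S^{(t+1)}\le e^{L}S^{(t)}$, so from $S^{(0)}=1$ an induction shows $\mathbb{E}[S^{(t)}]<\infty$ for every $t$.

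Now I would take $\mathbb{E}[\,\cdot\mid\mathcal{F}_t]$ of the displayed bound. The step requiring care is that $x^{(t)}$ and each $q_j^{(t)}$ are $\mathcal{F}_t$-measurable — this is precisely where the coupling between parameter learning and strategy update has to be controlled, and I expect it to be the main bookkeeping obstacle: one checks by induction that $\mu_j^{(t)}$ is a function of $\{x_i^{(s)},y_i^{(s)}:i\in\mathcal{N},\,0\le s\le t-1\}$ via \eqref{Bayesian_update}--\eqref{non-Bayesian_update}, hence $\mathcal{F}_t$-measurable, and then $x^{(t)}$ is $\mathcal{F}_t$-measurable as long as the best-response selection in \eqref{strategy_update} is made measurably. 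Conditioned on $\mathcal{F}_t$, the realized cost $y_j^{(t)}$ has density $f_j(\cdot\mid x^{(t)},\theta^{\ast})$, so $\mathbb{E}[r_j^{(t)}\mid\mathcal{F}_t]=\int\big(f_j(y\mid x^{(t)},\theta_k)/f_j(y\mid x^{(t)},\theta^{\ast})\big)^{\alpha^{(t)}}f_j(y\mid x^{(t)},\theta^{\ast})\,dy$, and since $u\mapsto u^{\alpha^{(t)}}$ is concave for $0<\alpha^{(t)}<1$ (Assumption \ref{asm4}), Jensen's inequality bounds this by $\big(\int f_j(y\mid x^{(t)},\theta_k)\,dy\big)^{\alpha^{(t)}}=1$. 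Combining with the displayed bound gives $\mathbb{E}[S^{(t+1)}\mid\mathcal{F}_t]\le\frac{1}{N}\sum_j q_j^{(t)}=S^{(t)}$. Thus $\{S^{(t)}\}$ is a nonnegative supermartingale, so $S^{(t)}\to\nu_k$ almost surely for some nonnegative random variable $\nu_k$, as claimed.

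The genuinely non-routine ingredient is the twofold use of Jensen's inequality: once across agents, through the log-linear consensus step \eqref{non-Bayesian_update}, to dominate $q_i^{(t+1)}$ by a convex combination of the $q_j^{(t)}r_j^{(t)}$; and once in $y$, through the concavity supplied by the tempering exponent $\alpha^{(t)}$, to kill the likelihood ratio in conditional expectation. Together these convert the coupled, non-i.i.d.\ dynamics into a clean supermartingale, after which only the measurability checks above and the standard convergence theorem remain.
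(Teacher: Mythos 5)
Your proof is correct and follows essentially the same route as the paper's: bound the belief ratio via Jensen's inequality applied to the geometric averaging step, use double stochasticity to average over agents, then use concavity of $u\mapsto u^{\alpha^{(t)}}$ under the conditional expectation to obtain the supermartingale property and invoke the supermartingale convergence theorem. The additional measurability, positivity, and integrability checks you include are sound housekeeping that the paper leaves implicit.
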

\begin{proof}
	By the belief update rules \eqref{Bayesian_update} and \eqref{non-Bayesian_update},
	\begin{align*}
		\frac{\mu_i^{(t+1)}(\theta_k)}{\mu_i^{(t+1)}(\theta^{\ast})} &=\exp\left( \sum_{j=1}^Nw_{ij}\log\frac{b_j^{(t)}(\theta_k)}{b_j^{(t)}(\theta^{\ast})}\right)\\
		&\le \sum_{j=1}^Nw_{ij}\frac{f_j(y_j^{(t)}|x^{(t)},\theta_k)^{\alpha^{(t)}}\mu_j^{{(t)}}(\theta_k)}{f_j(y_j^{(t)}|x^{(t)},\theta^{\ast})^{\alpha^{(t)}}\mu_j^{{(t)}}(\theta^{\ast})},
	\end{align*}
	where the inequality is followed by $\sum_{j=1}^Nw_{ij} = 1$ and the Jensen's inequality. Furthermore, using $\sum_{i=1}^Nw_{ij} = 1$, we get
	\[\sum_{i=1}^N\frac{\mu_i^{(t+1)}(\theta_k)}{\mu_i^{(t+1)}(\theta^{\ast})}\le \sum_{i=1}^N\frac{f_j(y_i^{(t)}|x^{(t)},\theta_k)^{\alpha^{(t)}}\mu_i^{(t)}(\theta_k)}{f_i(y_i^{(t)}|x^{(t)},\theta^{\ast})^{\alpha^{(t)}}\mu_i^{(t)}(\theta^{\ast})}.\]
	Thus, by taking conditional expectation and noting that $\mu_i^{(t)}(\theta)$ is $\mathcal{F}_t$-measurable for all $\theta\in\Theta$, we derive
	\[\mathbb{E}\left[\sum_{i=1}^N\frac{\mu_i^{(t+1)}(\theta_k)}{\mu_i^{(t+1)}(\theta^{\ast})}|\mathcal{F}_t\right] \le \sum_{i=1}^N\frac{\mu_i^{(t)}(\theta_k)}{\mu_i^{(t)}(\theta^{\ast})}\mathbb{E}\left[\left(\frac{f_i(y_i^{(t)}|x^{(t)},\theta_k)}{f_i(y_i^{(t)}|x^{(t)},\theta^{\ast})}\right)^{\alpha^{(t)}}|\mathcal{F}_t\right],\]
	where the expectation is taken on the probability distribution $\prod_{j=1}^Nf_j(y_j|x,\theta^{\ast})$. Since $x^{\alpha}$ is a concave function when $0<\alpha<1$, Jensen's inequality implies that
	\begin{align*}
		\mathbb{E}\left[\sum_{i=1}^N\frac{\mu_i^{(t+1)}(\theta_k)}{\mu_i^{(t+1)}(\theta^{\ast})}|\mathcal{F}_t\right] &\le \sum_{i=1}^N\frac{\mu_i^{(t)}(\theta_k)}{\mu_i^{(t)}(\theta^{\ast})}\mathbb{E}\left[\frac{f_i(y_i^{(t)}|x^{(t)},\theta_k)}{f_i(y_i^{(t)}|x^{(t)},\theta^{\ast})}|\mathcal{F}_t\right]^{\alpha^{(t)}}\\
		&=  \sum_{i=1}^N\frac{\mu_i^{(t)}(\theta_k)}{\mu_i^{(t)}(\theta^{\ast})}\left(\int_{y_i^{(t)}}f_i(y_i^{(t)}|x^{(t)},\theta^{\ast})\frac{f_i(y_i^{(t)}|x^{(t)},\theta_k)}{f_i(y_i^{(t)}|x^{(t)},\theta^{\ast})}dy_i^{(t)}\right)^{\alpha^{(t)}}\\
		&= \sum_{i=1}^N\frac{\mu_i^{(t)}(\theta_k)}{\mu_i^{(t)}(\theta^{\ast})},
	\end{align*}
	where the first equality follows from that conditioned on the current action profile $x^{(t)}$, $\frac{f_i(y_i^{(t)}|x^{(t)},\theta_k)}{f_i(y_i^{(t)}|x^{(t)},\theta^{\ast})}$ is independent of $\mathcal{F}_t$. Therefore, $\sum_{i=1}^N\frac{\mu_i^{(t)}(\theta_k)}{\mu_i^{(t)}(\theta^{\ast})}$ is a positive supermartingale. By the supermartingale convergence theorem \cite[Theorem 9.4.4]{chung2001course}, we conclude that $\frac{1}{N}\sum_{i=1}^N\frac{\mu_i^{(t)}(\theta)}{\mu_i^{(t)}(\theta^{\ast})}$ converges with probability $1$.
\end{proof}
Next, we establish the consensus of the log-belief ratio $\log\frac{\mu_i^{(t)}(\theta)}{\mu_i^{(t)}(\theta^{\ast})}$.
\begin{lem}
	Let Assumptions \ref{asm1},\ref{asm3},\ref{asm4} hold. Then the agents' log-belief ratios reach consensus, i.e., for all $\theta_k\in\Theta$,
	\begin{equation}
		\left|\log\frac{\mu_i^{(t)}(\theta_k)}{\mu_i^{(t)}(\theta^{\ast})} - \frac{1}{N}\sum_{i=1}^N\log\frac{\mu_i^{(t)}(\theta_k)}{\mu_i^{(t)}(\theta^{\ast})}\right|\to 0.
	\end{equation} 
\end{lem}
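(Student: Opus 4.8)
The plan is to convert the coupled belief/strategy dynamics into a single \emph{exact} linear recursion for the log-belief ratios and then exploit the geometric mixing of $W$. Fix $\theta_k$ and set $\ell_i^{(t)} := \log\frac{\mu_i^{(t)}(\theta_k)}{\mu_i^{(t)}(\theta^{\ast})}$ and $d_i^{(t)} := \log\frac{f_i(y_i^{(t)}|x^{(t)},\theta_k)}{f_i(y_i^{(t)}|x^{(t)},\theta^{\ast})}$. Dividing \eqref{Bayesian_update} written for $\theta_k$ by the same relation for $\theta^{\ast}$, the normalizing constant cancels and yields $\log\frac{b_i^{(t)}(\theta_k)}{b_i^{(t)}(\theta^{\ast})} = \ell_i^{(t)} + \alpha^{(t)} d_i^{(t)}$; likewise \eqref{non-Bayesian_update} gives $\ell_i^{(t+1)} = \sum_{j=1}^N w_{ij}\log\frac{b_j^{(t)}(\theta_k)}{b_j^{(t)}(\theta^{\ast})}$. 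Combining these, $\ell_i^{(t+1)} = \sum_{j=1}^N w_{ij}\bigl(\ell_j^{(t)} + \alpha^{(t)} d_j^{(t)}\bigr)$, i.e. $\ell^{(t+1)} = W\ell^{(t)} + \alpha^{(t)} W d^{(t)}$ in vector form. The uniform initialization $\mu_i^{(0)} = \frac1M\textbf{1}_M$ gives $\ell^{(0)} = 0$, so unrolling the recursion yields $\ell^{(t)} = \sum_{s=0}^{t-1}\alpha^{(s)} W^{t-s} d^{(s)}$.

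Next I would take the network average. Since $W$ is doubly stochastic, $\textbf{1}_N^\top W = \textbf{1}_N^\top$, hence $\bar\ell^{(t)} := \frac1N\sum_{i=1}^N\ell_i^{(t)} = \sum_{s=0}^{t-1}\alpha^{(s)}\bar d^{(s)}$ with $\bar d^{(s)} := \frac1N\sum_{j=1}^N d_j^{(s)}$. Subtracting,
\[
\ell_i^{(t)} - \bar\ell^{(t)} = \sum_{s=0}^{t-1}\alpha^{(s)}\sum_{j=1}^N\Bigl(W^{t-s}(i,j) - \tfrac1N\Bigr) d_j^{(s)}.
\]
Assumption \ref{asm1} gives the pathwise bound $|d_j^{(s)}|\le L$, and the mixing bound $|W^{t-s}(i,j) - \tfrac1N|\le \lambda_{\max}(W)^{t-s}$ then yields
\[
\Bigl|\ell_i^{(t)} - \bar\ell^{(t)}\Bigr| \le NL\sum_{s=0}^{t-1}\alpha^{(s)}\,\lambda_{\max}(W)^{t-s},
\]
a bound that is deterministic and uniform in $i$, so the resulting convergence will in fact be sure, not merely almost sure.

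It remains to show $r_t := \sum_{s=0}^{t-1}\alpha^{(s)}\lambda^{t-s}\to 0$, where $\lambda := \lambda_{\max}(W)\in(0,1)$. Since $\sum_t(\alpha^{(t)})^2<\infty$ forces $\alpha^{(t)}\to 0$, given $\varepsilon>0$ I would pick $T$ with $\alpha^{(s)}\le\varepsilon$ for $s\ge T$ and split $r_t$ at $s=T$: the head $\sum_{s<T}\alpha^{(s)}\lambda^{t-s}\le \lambda^{t-T+1}\sum_{s<T}\alpha^{(s)}\to 0$ as $t\to\infty$ for fixed $T$, while the tail $\sum_{T\le s<t}\alpha^{(s)}\lambda^{t-s}\le\varepsilon\sum_{r\ge1}\lambda^r = \varepsilon\lambda/(1-\lambda)$. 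Hence $\limsup_t r_t\le \varepsilon\lambda/(1-\lambda)$ for every $\varepsilon>0$, so $r_t\to 0$ and the claim follows.

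The only genuine subtlety is the first step: unlike the inequality-based, supermartingale argument used for the preceding lemma, here the geometric (log-linear) averaging makes the normalizers cancel \emph{exactly}, so the log-ratios satisfy an exact linear recursion driven by the bounded, vanishing increments $\alpha^{(t)} d^{(t)}$; once that structure is recognized, the consensus statement is a routine geometric-mixing estimate that does not use the probabilistic structure at all. A minor point to handle with care is that $\sum_s\alpha^{(s)}=\infty$ does not by itself kill $r_t$ — one really needs $\alpha^{(t)}\to 0$ (supplied by square-summability in Assumption \ref{asm4}) in combination with the geometrically decaying weights.
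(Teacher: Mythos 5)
Your proof is correct and follows essentially the same route as the paper's: derive the exact linear recursion $\ell^{(t+1)} = W\ell^{(t)} + \alpha^{(t)}Wd^{(t)}$ from the cancellation of the normalizers, unroll it using the uniform prior, subtract the network average via double stochasticity, and bound the deviation by $NL\sum_{s}\alpha^{(s)}\lambda_{\max}(W)^{t-s}$ using Assumption \ref{asm1} and the mixing rate. The only cosmetic difference is that you prove the vanishing of the geometric convolution $\sum_{s}\alpha^{(s)}\lambda^{t-s}$ directly by a head/tail splitting argument, whereas the paper cites this as a known lemma from \cite{nedic2010constrained}.
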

\begin{proof}
	Again, by the belief update rules \eqref{Bayesian_update} and \eqref{non-Bayesian_update}, we derive
	\begin{align}
		\log\frac{\mu_i^{(t+1)}(\theta_k)}{\mu_i^{(t+1)}(\theta^{\ast})} &= \sum_{j=1}^Nw_{ij}\log\frac{b_j^{(t)}(\theta_k)}{b_j^{(t)}(\theta^{\ast})}\notag\\
		&= \sum_{j=1}^Nw_{ij}\log\frac{\mu_j^{(t)}(\theta_k)}{\mu_j^{(t)}(\theta^{\ast})} + \alpha^{(t)}\sum_{j=1}^Nw_{ij}\log\frac{f_j(y_j^{(t)}|x^{(t)},\theta_k)}{f_j(y_j^{(t)}|x^{(t)},\theta^{\ast})}\notag\\
		&= \sum_{j=1}^N\sum_{\tau=1}^tW^{\tau}(i,j)\alpha^{(t-\tau +1)}\log \frac{f_j(y_j^{(t-\tau+1)}|x^{(t-\tau +1)},\theta_k)}{f_j(y_j^{(t-\tau+1)}|x^{(t-\tau +1)},\theta^{\ast})} + \sum_{j=1}^NW^{t+1}(i,j)\log\frac{\mu_j^{(0)}(\theta_k)}{\mu_j^{(0)}(\theta^{\ast})}\notag\\
		&= \sum_{j=1}^N\sum_{\tau=1}^tW^{\tau}(i,j)\alpha^{(t-\tau +1)}\log \frac{f_j(y_j^{(t-\tau+1)}|x^{(t-\tau +1)},\theta_k)}{f_j(y_j^{(t-\tau+1)}|x^{(t-\tau +1)},\theta^{\ast})},\label{log_belief_ratio_1}
	\end{align}
	where the last equality follows from $\mu_i^{(0)} = \frac{1}{M}\textbf{1}_M$. By the double stochasticity of $W$, 
	\begin{equation}\label{average_log_belief_ratio}
		\frac{1}{N}\sum_{i=1}^N\log\frac{\mu_i^{(t+1)}(\theta_k)}{\mu_i^{(t+1)}(\theta^{\ast})} = \frac{1}{N}\sum_{i=1}^N\sum_{\tau=1}^t\alpha^{(t-\tau +1)}\log \frac{f_i(y_i^{(t-\tau+1)}|x^{(t-\tau +1)},\theta_k)}{f_i(y_i^{(t-\tau+1)}|x^{(t-\tau +1)},\theta^{\ast})},
	\end{equation}
	As a result,
	\begin{align*}
		&\quad\left|\log\frac{\mu_i^{(t+1)}(\theta_k)}{\mu_i^{(t+1)}(\theta^{\ast})} - \frac{1}{N}\sum_{i=1}^N\log\frac{\mu_i^{(t+1)}(\theta_k)}{\mu_i^{(t+1)}(\theta^{\ast})}\right|\\
		&\le \sum_{j=1}^N\sum_{\tau=1}^t\alpha^{(t-\tau+1)}\left|W^{\tau}(i,j) - \frac{1}{N}\right|\left|\log \frac{f_j(y_j^{(t-\tau+1)}|x^{(t-\tau+1)},\theta_k)}{f_j(y_j^{(t-\tau+1)}|x^{(t-\tau+1)},\theta^{\ast})}\right|.
	\end{align*}
	By the connectivity of the communication graph, $\left|W^{\tau}(i,j) - \frac{1}{N}\right|\le\lambda_{\max}(W)^{\tau}$. Thus, by Assumption \ref{asm1},
	\[\left|\log\frac{\mu_i^{(t+1)}(\theta_k)}{\mu_i^{(t+1)}(\theta^{\ast})} - \frac{1}{N}\sum_{i=1}^N\log\frac{\mu_i^{(t+1)}(\theta_k)}{\mu_i^{(t+1)}(\theta^{\ast})}\right|\le NL\sum_{\tau = 1}^t\alpha^{(t-\tau+1)}\lambda_{\max}(W)^{\tau},\]
	which converges to $0$ by Lemma 7\footnote{Let $0<\beta<1$ and let $\{\gamma_k\}$ be a positive scalar sequence. Assume that $\lim_{k\to\infty}\gamma_k = 0$. Then $\lim_{k\to\infty}\sum_{l=0}^k\beta^{k-l}\gamma_l = 0$.} of \cite{nedic2010constrained}. 
\end{proof}
Lemma 3 provides the reason why we consider a tempered posterior distribution in Algorithm \ref{alg1}. Based on Lemma 2 and Lemma 3, we now present our first main result in the following theorem.
\begin{thm}\label{thm1}
	Let Assumptions \ref{asm1}, \ref{asm3}, \ref{asm4} hold. The belief sequence $\{\mu_i^{(t)}\}_{t\ge 0}$ of each agent generated by Algorithm \ref{alg1} converges almost surely to a common belief $\mu$ with the form
	\[\mu(\theta_k) = \frac{\nu_k}{\sum_{k=1}^M\nu_k},\ \text{for}\ k = 1,\dots,M\]
	where $\nu_k$ is defined in Lemma 2.
\end{thm}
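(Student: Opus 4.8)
The plan is to combine Lemma 2 and Lemma 3: Lemma 2 gives almost-sure convergence of the arithmetic mean of the belief ratios, while Lemma 3 gives asymptotic consensus of the log-belief ratios, and together these force each individual belief ratio $\mu_i^{(t)}(\theta_k)/\mu_i^{(t)}(\theta^{\ast})$ to converge to the common limit $\nu_k$; the beliefs themselves are then recovered by normalization. Write $r_i^{(t)}(\theta_k):=\mu_i^{(t)}(\theta_k)/\mu_i^{(t)}(\theta^{\ast})$. Since $r_i^{(t)}(\theta^{\ast})\equiv 1$, the limit of its arithmetic mean is $1$, so $\sum_{k=1}^M\nu_k\ge 1>0$ almost surely and the claimed limit $\mu(\theta_k)=\nu_k/\sum_{k'=1}^M\nu_{k'}$ is well defined.

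The core step is to upgrade the two lemmas into $r_i^{(t)}(\theta_k)\to\nu_k$ almost surely, for each fixed $\theta_k$ and each $i$. Set $\bar{\ell}_t:=\frac{1}{N}\sum_{i=1}^N\log r_i^{(t)}(\theta_k)$, so that Lemma 3 reads $\log r_i^{(t)}(\theta_k)=\bar{\ell}_t+\delta_i^{(t)}$ with $\max_i|\delta_i^{(t)}|\to 0$. Concavity of $\log$ and Jensen's inequality give $\bar{\ell}_t\le\log\bigl(\frac{1}{N}\sum_{i=1}^N r_i^{(t)}(\theta_k)\bigr)$, whose right-hand side converges almost surely to $\log\nu_k$ by Lemma 2 (with the convention $\log 0=-\infty$ on the event $\{\nu_k=0\}$). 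For the matching lower bound I would use $\frac{1}{N}\sum_{i=1}^N r_i^{(t)}(\theta_k)\le\max_i r_i^{(t)}(\theta_k)=\exp\bigl(\bar{\ell}_t+\max_i\delta_i^{(t)}\bigr)$, which rearranges to $\bar{\ell}_t\ge\log\bigl(\frac{1}{N}\sum_{i=1}^N r_i^{(t)}(\theta_k)\bigr)-\max_i\delta_i^{(t)}$, again converging almost surely to $\log\nu_k$. Sandwiching yields $\bar{\ell}_t\to\log\nu_k$, hence $\log r_i^{(t)}(\theta_k)=\bar{\ell}_t+\delta_i^{(t)}\to\log\nu_k$ and therefore $r_i^{(t)}(\theta_k)\to\nu_k$ for every $i$ (reading $e^{-\infty}=0$).

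Finally, for each agent $i$ and each $t$, dividing the identity $\sum_{k'=1}^M\mu_i^{(t)}(\theta_{k'})=1$ by $\mu_i^{(t)}(\theta^{\ast})$ gives $\mu_i^{(t)}(\theta_k)=r_i^{(t)}(\theta_k)\big/\sum_{k'=1}^M r_i^{(t)}(\theta_{k'})$. Letting $t\to\infty$ and using the previous step together with $|\Theta|=M<\infty$, the numerator tends to $\nu_k$ and the denominator to $\sum_{k'=1}^M\nu_{k'}\ge 1$, so $\mu_i^{(t)}(\theta_k)\to\nu_k/\sum_{k'=1}^M\nu_{k'}$ almost surely; since this limit is independent of $i$, it is the common belief $\mu(\theta_k)$, which is exactly the assertion. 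The step I expect to be the main obstacle is the sandwiching in the core step: one must track how the consensus errors $\delta_i^{(t)}$ enter both the upper and the lower bound and check that the argument degrades gracefully to $-\infty$ on $\{\nu_k=0\}$ instead of dividing by a vanishing quantity; the rest is the routine normalization of a finite probability vector.
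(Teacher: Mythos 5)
Your proposal is correct and follows essentially the same route as the paper: combine Lemma 2 and Lemma 3 (via Jensen/exponentiation) to conclude $\log\frac{1}{N}\sum_i\mu_i^{(t)}(\theta_k)/\mu_i^{(t)}(\theta^{\ast})$ and the average log-ratio share the limit $\log\nu_k$, deduce $\mu_i^{(t)}(\theta_k)/\mu_i^{(t)}(\theta^{\ast})\to\nu_k$ for each $i$, and normalize using $\nu_{k^{\ast}}=1$. Your final step is marginally cleaner than the paper's, which re-derives $\mu_i^{(t+1)}(\theta^{\ast})$ from the update rules \eqref{Bayesian_update}--\eqref{non-Bayesian_update} rather than simply dividing $\sum_{k'}\mu_i^{(t)}(\theta_{k'})=1$ by $\mu_i^{(t)}(\theta^{\ast})$, but this is a cosmetic difference, not a different argument.
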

\begin{proof}
	Performing an exponential operation on Lemma 3 yields
	\[\frac{\mu_i^{(t+1)}(\theta_k)}{\mu_i^{(t+1)}(\theta^{\ast})}\cdot\frac{1}{\exp\left(\frac{1}{N}\sum_{i=1}^N\log\frac{\mu_i^{(t+1)}(\theta_k)}{\mu_i^{(t+1)}(\theta^{\ast})}\right)}\to 1.\]
	Taking the average, we get
	\[\frac{1}{N}\sum_{i=1}^N\frac{\mu_i^{(t+1)}(\theta_k)}{\mu_i^{(t+1)}(\theta^{\ast})}\cdot\frac{1}{\exp\left(\frac{1}{N}\sum_{i=1}^N\log\frac{\mu_i^{(t+1)}(\theta_k)}{\mu_i^{(t+1)}(\theta^{\ast})}\right)}\to 1.\]
	Furthermore, taking the logarithm of both sides, we obtain
	\[\log \left(\frac{1}{N}\sum_{i=1}^N\frac{\mu_i^{(t+1)}(\theta_k)}{\mu_i^{(t+1)}(\theta^{\ast})}\right) - \frac{1}{N}\sum_{i=1}^N\log\frac{\mu_i^{(t+1)}(\theta_k)}{\mu_i^{(t+1)}(\theta^{\ast})} \to 0.\]
	Thus, by Lemma 2, 
	\begin{equation*}
		\frac{1}{N}\sum_{i=1}^N\log\frac{\mu_i^{(t+1)}(\theta_k)}{\mu_i^{(t+1)}(\theta^{\ast})}\to \log\nu_k,\ a.s.
	\end{equation*}
	Using Lemma 3 again, we get
	\begin{equation}\label{log-belief_ratio}
		\log\frac{\mu_i^{(t+1)}(\theta_k)}{\mu_i^{(t+1)}(\theta^{\ast})}\to \log\nu_k,\ a.s.
	\end{equation}
	On the other hand, by the belief update rules,
	\begin{align}
		\mu_i^{(t+1)}(\theta^{\ast}) &\overset{\eqref{non-Bayesian_update}}{=} \frac{\exp(\sum_{j=1}^Nw_{ij}\log b_j^{(t)}(\theta^{\ast}))}{\sum_{\theta\in\Theta}\exp(\sum_{j=1}^Nw_{ij}\log b_j^{(t)}(\theta))}\notag\\
		&= \left(1 + \sum_{\theta\neq\theta^{\ast}}\exp\left(\sum_{j=1}^Nw_{ij}\log\frac{b_j^{(t)}(\theta)}{b_j^{(t)}(\theta^{\ast})}\right)\right)^{-1}\notag\\
		&\overset{\eqref{Bayesian_update}}{=} \left(1 + \sum_{\theta\neq\theta^{\ast}}\exp\left(\sum_{j=1}^Nw_{ij}\alpha^{(t)}\log\frac{f_j(y_j^{(t)}|x^{(t)},\theta)}{f_j(y_j^{(t)}|x^{(t)},\theta^{\ast})} + \sum_{j=1}^Nw_{ij}\log\frac{\mu_j^{(t)}(\theta)}{\mu_j^{(t)}(\theta^{\ast})}\right)\right)^{-1}\label{belief_true_parameter}
	\end{align}
	Without loss of generality, we let $\theta^{\ast} = \theta_1$. By Assumption \ref{asm1} and Assumption \ref{asm4}, $\log\frac{f_j(y_j^{(t)}|x^{(t)},\theta)}{f_j(y_j^{(t)}|x^{(t)},\theta^{\ast})}$ is bounded and $\alpha^{(t)}\to 0$. Thus, for all $\theta\in\Theta$,
	\begin{equation}\label{bounded_term}
		\exp\left(\sum_{j=1}^Nw_{ij}\alpha^{(t)}\log\frac{f_j(y_j^{(t)}|x^{(t)},\theta)}{f_j(y_j^{(t)}|x^{(t)},\theta^{\ast})}\right)\to 1.
	\end{equation}
	And by \eqref{log-belief_ratio},
	\begin{equation}\label{consensus_term}
		\exp\left(\sum_{j=1}^Nw_{ij}\log\frac{\mu_j^{(t)}(\theta_k)}{\mu_j^{(t)}(\theta^{\ast})}\right)\to \nu_k,\quad a.s.
	\end{equation}
	Substituting \eqref{bounded_term} and \eqref{consensus_term} into \eqref{belief_true_parameter}, we obtain 
	\[\mu_i^{t+1}(\theta^{\ast})\to (1 + \sum_{k=2}^M\nu_k)^{-1},\ a.s.\]
	Further, applying \eqref{log-belief_ratio} yields
	\[\mu_i^{t+1}(\theta_k)\to \frac{\nu_k}{1 + \sum_{k=2}^M\nu_k},\ a.s.\]
	The assertion follows by noting that $\nu_1 = 1$ when $\theta^{\ast} = \theta_1$.
\end{proof}
Theorem \ref{thm1} shows that agents' beliefs converge to a common belief, which is not necessarily the true belief. Distinct from the centralized Bayesian update in \cite{wu2020multi} whose belief convergence is directly obtained through the martingale convergence theorem, we also need to prove that the beliefs of different agents reach consensus.
\subsection{Strategy Convergence}
Similar to \eqref{NE_def}, we define the Nash equilibrium of the game with a common belief $\mu$ by a strategy profile $x^{\ast}(\mu)$ satisfying
\[u_i(x_i^{\ast}(\mu),x_{-i}^{\ast}(\mu),\mu)\le u_i(x_i,x_{-i}^{\ast}(\mu),\mu),\ \text{for all}\ x_i\in\mathcal{X}_i\]
where the expected cost is defined by \eqref{expected_payoff}. Moreover, we consider an auxiliary trajectory $\bar{x}(t)$ generated by the following continuous-time best-response dynamics under the common belief $\mu$ \begin{equation}\label{strategy_common}
	\frac{d\bar{x}_i(t)}{dt} \in \text{BR}_i(\bar{x}_{-i}(t),\mu) - \bar{x}_i(t),\quad \bar{x}_i(0) = x_i^{(0)}.
\end{equation}
To study the strategy convergence of Algorithm \ref{asm1}, we require the following assumption on the cost function.
\begin{assumption}\label{asm5}
	The cost function $u_i(x_i,x_{-i},\theta)$ is continuous in $x$ and strictly convex in $x_i$ for any $\theta\in\Theta$.
\end{assumption}
Assumption \ref{asm5} guarantees that there exists a unique Nash equilibrium for the game with a common belief \cite{scutari2010convex} and for any $\mu'\in\Delta(\Theta)$ and any $x_{-i}\in\mathcal{X}_{-i}$, the best response mapping $\text{BR}_i(x_{-i},\mu')$ is single-valued. Further, from Berge's maximum theorem \cite{ok2011real}, $\text{BR}_i(x_{-i},\mu')$ is continuous in both $x_{-i}$ and $\mu'$. 
In addition, inspired by \cite{wu2020multi}, we impose the following assumption on the best-response dynamics \eqref{strategy_common}, which holds for potential games and dominance solvable games \cite{wu2020multi}.
\begin{assumption}\label{asm6}
	For any $x^{(0)}\in\mathcal{X}$, the solution of \eqref{strategy_common} converges to the Nash equilibrium of the game with the common belief $\mu$, i.e., $\lim_{t\to\infty}\bar{x}(t) = x^{\ast}(\mu)$. 
\end{assumption}
Our next main result is based on the following stochstic approximation conclusion \cite{borkar2009}.
\begin{lem}
	Consider a stochstic approximation scheme
	\[z_{n+1} = z_n + \gamma_n(h(z_n) + \xi_n),\]
	where $\{\xi_n\}$ is a bounded random sequence with $\xi_n\to 0$ almost surely. Assume that $h$ is Lipschitz continuous and the step-size sequence $\gamma_n$ satisfies $\sum_n\gamma_n = \infty$ and $\sum_n\gamma_n^2<\infty$. If $\sup_n\|z_n\|<\infty$ almost surely, then the sequence $\{z_n\}$ converges almost surely to the set of asymptotically stable equilibria of the dynamics $dz(t)/dt = h(z(t))$. 
\end{lem}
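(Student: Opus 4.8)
The plan is to recognise this lemma as an instance of the classical \emph{ODE method} for stochastic approximation and to prove it by showing that the linearly interpolated iterates form an asymptotic pseudotrajectory of the semiflow $\Phi$ generated by $\dot z(t) = h(z(t))$. First I would restrict attention to the almost-sure event on which $\xi_n\to 0$ and $\sup_n\|z_n\|<\infty$; on that event all iterates lie in a fixed compact ball $B$, and since $h$ is Lipschitz it is bounded on a neighbourhood of $B$ and the ODE restricted there generates a well-defined flow $\Phi_t$. Introduce the time scale $t_0=0$, $t_{n+1}=t_n+\gamma_n$, which diverges because $\sum_n\gamma_n=\infty$, and let $\bar z(\cdot)$ be the continuous piecewise-linear interpolant with $\bar z(t_n)=z_n$.

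The core estimate is to bound, for a fixed horizon $T>0$, the deviation $\sup_{t\in[t_m,t_m+T]}\|\bar z(t)-\Phi_{t-t_m}(z_m)\|$ and show it vanishes as $m\to\infty$. Writing the recursion in integral form, this discrepancy is driven by three terms: the interpolation error between $\bar z$ and the polygonal path through the $z_n$ (of order $\sup_{k\ge m}\gamma_k\to 0$ using boundedness of $h$ on $B$); the Riemann-sum error of $\int h(\bar z)\,ds$ against $\sum\gamma_k h(z_k)$ (again $O(\sup_{k\ge m}\gamma_k)$ since $h$ is Lipschitz and $\bar z$ stays in $B$); and the accumulated perturbation $\sum_{k:\,t_k\in[t_m,t_m+T)}\gamma_k\xi_k$. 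The last term is the delicate one in general, but here it is easy: the indices with $t_k\in[t_m,t_m+T)$ carry total step-size at most $T+\sup_{k\ge m}\gamma_k$, so this sum is bounded by $(T+\sup_{k\ge m}\gamma_k)\,\sup_{k\ge m}\|\xi_k\|\to 0$ — no martingale or square-summability argument for the noise is needed, although $\sum_n\gamma_n^2<\infty$ is exactly the tool one would use if $\xi_n$ were only an $L^2$ martingale-difference sequence rather than an a.s.\ null sequence. Plugging the three bounds into Gronwall's inequality (with the Lipschitz constant of $h$) gives $\sup_{t\in[t_m,t_m+T]}\|\bar z(t)-\Phi_{t-t_m}(z_m)\|\to 0$, i.e., $\bar z$ is an asymptotic pseudotrajectory of $\Phi$.

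Given this, I would invoke the limit-set characterisation of asymptotic pseudotrajectories (Benaïm): the set $L(\{z_n\})$ of limit points, which is nonempty, compact and connected because $\sup_n\|z_n\|<\infty$ and $\gamma_n\to 0$, is internally chain transitive and hence invariant under $\Phi$. To pass from this to convergence to the set of asymptotically stable equilibria I would use the structure of the flow: every internally chain transitive invariant set lies in the chain-recurrent set, and for a semiflow all of whose trajectories converge to an equilibrium — the situation relevant to \eqref{strategy_common} under Assumption~\ref{asm6} — the chain-recurrent set reduces to the equilibrium set, so $L(\{z_n\})$ is a connected subset of the equilibria. An asymptotically stable equilibrium cannot be accumulated by a nontrivial chain-transitive continuum, so either $L(\{z_n\})$ is a single stable equilibrium or it is contained in the non-stable equilibria, and the latter is excluded on the almost-sure event by the standard non-convergence-to-unstable-equilibria results for stochastic approximation with exciting noise (or, in our eventual application, trivially, since $x^\ast(\mu)$ is the unique and globally attracting equilibrium, forcing $L=\{x^\ast(\mu)\}$).

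I expect the main obstacle to be precisely this last passage — from ``$\bar z$ tracks the ODE'' to ``$z_n$ enters the asymptotically stable equilibria'' — since it requires identifying the internally chain transitive invariant sets of $\Phi$, whereas the pseudotrajectory estimate itself is routine once the hypotheses are in place. In the present paper that obstacle is removed by Assumption~\ref{asm6}, which guarantees the only relevant invariant set is the single Nash equilibrium under the common belief, so the general lemma specialises cleanly to the strategy dynamics \eqref{strategy_update}.
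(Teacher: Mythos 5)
The paper offers no proof of this lemma: it is quoted verbatim as a known result from Borkar's monograph on stochastic approximation, so there is no in-paper argument to compare against. Your reconstruction is the standard ODE-method proof and is essentially correct. In particular, your treatment of the noise term is right and is genuinely simpler than the usual martingale argument: because $\xi_n$ is bounded and tends to $0$ almost surely, the accumulated perturbation over a window of length $T$ is bounded by $(T+\sup_{k\ge m}\gamma_k)\sup_{k\ge m}\|\xi_k\|\to 0$, and square-summability of $\gamma_n$ is not needed for this step (only $\gamma_n\to 0$ is, for the interpolation and Riemann-sum errors). The Gronwall step and the appeal to Bena\"im's limit-set theorem (the limit set of a precompact asymptotic pseudotrajectory is internally chain transitive) are the standard route.

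One caution on the final passage. Your claim that ``for a semiflow all of whose trajectories converge to an equilibrium the chain-recurrent set reduces to the equilibrium set'' is false in general: a heteroclinic cycle has every trajectory converging to an equilibrium while the whole cycle is chain recurrent, so the lemma as stated (convergence to the set of asymptotically stable equilibria) is not a consequence of the pseudotrajectory estimate alone without extra hypotheses. You correctly identify this as the delicate step and correctly resolve it for the present application: under Assumptions~\ref{asm5} and~\ref{asm6} the equilibrium $x^{\ast}(\mu)$ is unique and globally attracting, so its basin is all of $\mathcal{X}$ and the standard attractor--basin argument forces every internally chain transitive set to be $\{x^{\ast}(\mu)\}$. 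So your proof is sound for the way the lemma is actually used in Theorem~2, and your honest flagging of the gap in the general statement is a point in your favour rather than an error.
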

We proceed to derive the convergence of the strategy profiles $\{x^{(t)}\}_{t=1}^{\infty}$.
\begin{thm}
	Let Assumptions \ref{asm1}, \ref{asm3}-\ref{asm6} hold. Then, the strategy profile $x^{(t)}$ generated by Algorithm \ref{alg1}  converges almost surely to $x^{\ast}(\mu)$.
\end{thm}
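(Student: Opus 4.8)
The plan is to recast the strategy update \eqref{strategy_update} as a stochastic approximation iteration whose driving field is precisely that of the common-belief best-response flow \eqref{strategy_common}, with the belief mismatch entering only as a perturbation, and then to invoke Lemma 5 together with Assumption \ref{asm6}. Concretely, I would stack \eqref{strategy_update} over $i\in\mathcal{N}$ and write it as
\[x^{(t+1)} = x^{(t)} + \alpha^{(t)}\bigl(h(x^{(t)}) + \xi^{(t)}\bigr), \qquad h(x) := \text{col}\{\text{BR}_i(x_{-i},\mu) - x_i\}_{i\in\mathcal{N}},\]
where $h$ is the right-hand side of \eqref{strategy_common} and
\[\xi^{(t)} := \text{col}\bigl\{\text{BR}_i(x_{-i}^{(t)},\mu_i^{(t+1)}) - \text{BR}_i(x_{-i}^{(t)},\mu)\bigr\}_{i\in\mathcal{N}}\]
is the error incurred by using the not-yet-consensual beliefs $\mu_i^{(t+1)}$ in place of the limiting common belief $\mu$. (Under Assumption \ref{asm5} the maps $\text{BR}_i$ are single-valued, so \eqref{strategy_update} is an equality.)

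Next I would verify the hypotheses of Lemma 5. Since each $\mathcal{X}_i$ is convex and compact and $0<\alpha^{(t)}<1$, every iterate $x^{(t)}$ is a convex combination of points of $\mathcal{X}$, hence stays in $\mathcal{X}$; thus $\sup_t\|x^{(t)}\|<\infty$ and $\{\xi^{(t)}\}$ is bounded by the diameter of $\mathcal{X}$. Assumption \ref{asm4} gives $\sum_t\alpha^{(t)}=\infty$ and $\sum_t(\alpha^{(t)})^2<\infty$, and under Assumption \ref{asm5} Berge's maximum theorem makes each $\text{BR}_i$ single-valued and jointly continuous on the compact set $\mathcal{X}_{-i}\times\Delta(\Theta)$, which supplies the regularity of $h$ needed to apply Lemma 5. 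The remaining and central requirement, $\xi^{(t)}\to 0$ almost surely, I would obtain from uniform continuity of $\text{BR}_i$ on the compact domain $\mathcal{X}_{-i}\times\Delta(\Theta)$: then
\[\|\xi^{(t)}\| \le \sum_{i\in\mathcal{N}} \sup_{x_{-i}\in\mathcal{X}_{-i}} \bigl\|\text{BR}_i(x_{-i},\mu_i^{(t+1)}) - \text{BR}_i(x_{-i},\mu)\bigr\| \longrightarrow 0\]
as soon as $\mu_i^{(t+1)}\to\mu$ for every $i$, and the latter is exactly the almost sure conclusion of Theorem \ref{thm1}.

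With these in hand, Lemma 5 yields that $x^{(t)}$ converges almost surely to the set of asymptotically stable equilibria of $d\bar{x}(t)/dt = h(\bar{x}(t))$, i.e., of \eqref{strategy_common}. By Assumption \ref{asm6} every solution of \eqref{strategy_common} converges to $x^{\ast}(\mu)$; applying this to the constant solution sitting at any equilibrium $z$ forces $z = x^{\ast}(\mu)$, so $x^{\ast}(\mu)$ is the unique equilibrium of \eqref{strategy_common} and, being globally attracting, the unique asymptotically stable one. Hence $x^{(t)}\to x^{\ast}(\mu)$ almost surely.

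I expect the main obstacle to be the almost sure vanishing of $\xi^{(t)}$: one must transport the belief consensus $\mu_i^{(t+1)}\to\mu$ of Theorem \ref{thm1} through the best-response maps uniformly over all possible opponent profiles $x_{-i}^{(t)}$, which is the step that actually decouples the parameter-learning and strategy-update processes and relies on compactness of $\Delta(\Theta)\times\mathcal{X}_{-i}$. A secondary point is the passage from "asymptotically stable equilibria" in Lemma 5 to the single point $x^{\ast}(\mu)$, handled above via Assumption \ref{asm6}; and if the Lipschitz hypothesis of Lemma 5 is to be met literally rather than through continuity of $h$, one should either strengthen Assumption \ref{asm5} (e.g.\ to uniform strong convexity of $u_i$ in $x_i$) or appeal to the differential-inclusion version of the stochastic approximation result.
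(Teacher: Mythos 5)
Your proposal is correct and follows essentially the same route as the paper: decompose the update as a stochastic approximation iteration driven by $h(x)=\text{col}\{\text{BR}_i(x_{-i},\mu)-x_i\}$ plus a perturbation $\xi^{(t)}$ from the belief mismatch, show $\xi^{(t)}\to 0$ a.s.\ via Theorem \ref{thm1} and continuity of $\text{BR}_i$, and conclude with the stochastic approximation lemma and Assumption \ref{asm6}. You are in fact more careful than the paper on two points it glosses over --- the need for \emph{uniform} continuity of $\text{BR}_i$ over the compact set $\mathcal{X}_{-i}\times\Delta(\Theta)$ (since $x_{-i}^{(t)}$ is itself moving) and the Lipschitz hypothesis on $h$ in the stochastic approximation lemma --- and your resolutions of both are sound.
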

\begin{proof}
	By \eqref{strategy_update}, we get
	\begin{align*}
		x_i^{(t+1)} &= (1 - \alpha^{(t)})x_i^{(t)} + \alpha^{(t)}\text{BR}_i(x_{-i}^{(t)},\mu_i^{(t+1)})\\
		&=x_i^{(t)} + \alpha^{(t)}(\text{BR}_i(x_{-i}^{(t)},\mu_i^{(t+1)}) - x_i^{(t)})\\
		&= x_i^{(t)} + \alpha^{(t)}\left(\text{BR}_i(x_{-i}^{(t)},\mu) - x_i^{(t)} + \text{BR}_i(x_{-i}^{(t)},\mu_i^{(t+1)}) - \text{BR}_i(x_{-i}^{(t)},\mu)\right)
	\end{align*}
	Note by Theorem 1 that $\mu_i^{(t+1)}\to \mu$ almost surely. Thus, using the continuity of $\text{BR}_i$, we obtain
	\[\text{BR}_i(x_{-i}^{(t)},\mu_i^{(t+1)}) - \text{BR}_i(x_{-i}^{(t)},\mu)\to 0, a.s.\]
	The conlusion is followed by applying Assumption \ref{asm4} and Assumption \ref{asm6} to Lemma 4.
\end{proof}
The strategy update rule \eqref{strategy_update} is similar to the best-response strategy considered in \cite{wu2020multi}, except that each agent uses its own belief in our algorithm. In fact, the equilibrium strategy and the corresponding convergence result in \cite{wu2020multi} are also applicable since we have obtained $\mu_i^{(t)}\to\mu,\ a.s.$.

\subsection{Convergence to the True Parameter}
In this part, we further prove that agents' beliefs converge to the true belief by combining Theorem 1 and Theorem 2. In general, the strong law of large numbers and McDiarmid’s inequality are employed in the social learning literature \cite{lalitha2018social,shahrampour2015distributed} to show the convergence of agents' beliefs to the true parameter. However, all these results rely on an assumption that the private signals (realized costs in our setting) are independent and identically distributed across the time $t$. And hence, these techniques cannot be directly applied to the game setting as the realized costs are not i.i.d due to the influence of the strategies. We instead use the following Toeplitz's lemma \cite{knopp1990theory} to develop a similar convergence result.
\begin{lem}
	Let $\{A_{nk},1\le k\le k_n\}_{n\ge 1}$ be a double array of positive numbers such that for fixed $k$, $A_{nk}\to 0$ when $n\to\infty$. Let $\{Y_n\}_{n\ge 1}$ be a sequence of real numbers. If $Y_n\to y$ and $\sum_{k=1}^{k_n}A_{nk}\to 1$ when $n\to \infty$, then $\lim_{n\to\infty}\sum_{k=1}^{k_n}A_{nk}Y_k = y$. 
	%\textcolor{blue}{($\lim_{n\to\infty}Y_n = \lim_{n\to\infty}\sum_{k=1}^{k_n}A_{nk}Y_k$, if $Y_n\to y$ a.s., then $\sum_{k=1}^{k_n}A_{nk}Y_k\to y$ a.s.)}
\end{lem}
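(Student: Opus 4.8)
The plan is to prove this by the classical ``head plus tail'' argument for regular summation methods, using only the positivity of the weights $A_{nk}$, the column-wise decay $A_{nk}\to 0$, and the row-sum normalization $\sum_{k=1}^{k_n}A_{nk}\to 1$. First I would strip off the limit: writing $\varepsilon_k:=Y_k-y$, so that $\varepsilon_k\to 0$, we have
\[
\sum_{k=1}^{k_n}A_{nk}Y_k \;=\; y\sum_{k=1}^{k_n}A_{nk} \;+\; \sum_{k=1}^{k_n}A_{nk}\varepsilon_k .
\]
The first term converges to $y\cdot 1=y$ by hypothesis, so it suffices to show $S_n:=\sum_{k=1}^{k_n}A_{nk}\varepsilon_k\to 0$. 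I also record the only extra consequence of the normalization that I need: since $\sum_{k=1}^{k_n}A_{nk}\to 1$, there is a finite constant $C$ with $\sum_{k=1}^{k_n}A_{nk}\le C$ for every $n$.

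Next, fix $\delta>0$ and pick $K$ with $|\varepsilon_k|<\delta$ for all $k>K$. Split $S_n$ at index $K$ into a head $H_n:=\sum_{k=1}^{\min(K,k_n)}A_{nk}\varepsilon_k$ and a tail $T_n:=\sum_{k=K+1}^{k_n}A_{nk}\varepsilon_k$ (with $T_n=0$ when $k_n\le K$). Positivity of the $A_{nk}$ gives the uniform tail bound
\[
|T_n|\;\le\;\delta\sum_{k=K+1}^{k_n}A_{nk}\;\le\;\delta\sum_{k=1}^{k_n}A_{nk}\;\le\;\delta C ,
\]
while for the head $K$ is fixed and finite, and $A_{nk}\to 0$ as $n\to\infty$ for each $k\le K$, so $|H_n|\le\bigl(\max_{1\le k\le K}|\varepsilon_k|\bigr)\sum_{k=1}^{K}A_{nk}\to 0$. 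Hence $\limsup_{n}|S_n|\le\delta C$, and letting $\delta\downarrow 0$ yields $S_n\to 0$, which proves the claim.

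The argument is essentially routine; the single point needing care is that the tail estimate must be uniform in $n$, i.e.\ the row sums may not blow up --- but this is automatic here because they converge to $1$. A minor bookkeeping remark: $k_n$ can be smaller than $K$ for small $n$, which is why the head/tail split is written with $\min(K,k_n)$; and implicitly $k_n\to\infty$, since a bounded $k_n$ together with $A_{nk}\to 0$ would force $\sum_k A_{nk}\to 0\neq 1$, though the proof above never uses this.
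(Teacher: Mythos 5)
Your proof is correct: it is the standard head-plus-tail argument for this classical result (a positive-weights special case of the Silverman--Toeplitz theorem), and you correctly identify that positivity of the $A_{nk}$ plus boundedness of the row sums is exactly what makes the tail estimate uniform in $n$. The paper itself offers no proof to compare against --- it states the lemma and cites Knopp --- so there is nothing further to reconcile; your argument is the one found in the cited reference.
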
 
Now we state our final main result in the following theorem.
\begin{thm}
	Suppose that Assumptions \ref{asm1}-\ref{asm6} hold. Let $\{\mu_i^{(t)}\}_{t\ge 0}$ be the belief sequence generated by Algorithm \ref{alg1}. Then, for each agent $i\in\mathcal{N}$, it holds that 
	\[\lim_{T\to\infty}\frac{1}{\sum_{t=1}^T\alpha^{(t)}}\log\frac{\mu_i^{(T+1)}(\theta^{\ast})}{\mu_i^{(T+1)}(\theta_k)}
	= Z(\theta^{\ast},\theta_k)\quad a.s. 
	\]
	where $Z(\theta^{\ast},\theta_k)=\frac{1}{N}\sum_{j=1}^ND_{KL}(f_j(y_j|x^{\ast}(\mu),\theta^{\ast})\|f_j(y_j|x^{\ast}(\mu),\theta_k))$ is the network divergence and $x^{\ast}(\mu)$ is defined in Assumption \ref{asm6}. In particular, we have $\mu_i^{(t)}(\theta^{\ast})\to 1$ almost surely.
\end{thm}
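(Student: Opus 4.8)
The plan is to reduce everything, as in the proofs of Lemmas 2 and 3, to the telescoped formula for the log-belief ratio. Re-indexing the double sum in \eqref{average_log_belief_ratio} by $s=t-\tau+1$ and using double stochasticity of $W$, one gets, with $T$ in place of $t$,
\[
\frac{1}{N}\sum_{i=1}^{N}\log\frac{\mu_i^{(T+1)}(\theta^{\ast})}{\mu_i^{(T+1)}(\theta_k)}=\sum_{s=1}^{T}\alpha^{(s)}\ell_s,\qquad \ell_s:=\frac{1}{N}\sum_{j=1}^{N}\log\frac{f_j(y_j^{(s)}\mid x^{(s)},\theta^{\ast})}{f_j(y_j^{(s)}\mid x^{(s)},\theta_k)}.
\]
Since $x^{(s)}$ is $\mathcal{F}_s$-measurable while $y_j^{(s)}$ is drawn from $f_j(\cdot\mid x^{(s)},\theta^{\ast})$, I would split $\ell_s=Z_s+\eta_s$, where $Z_s:=\mathbb{E}[\ell_s\mid\mathcal{F}_s]=\frac1N\sum_{j}D_{KL}\big(f_j(y_j\mid x^{(s)},\theta^{\ast})\,\|\,f_j(y_j\mid x^{(s)},\theta_k)\big)$ and $\eta_s:=\ell_s-Z_s$ is a martingale-difference term bounded by $2L$ thanks to Assumption \ref{asm1}. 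Dividing by $S_T:=\sum_{t=1}^{T}\alpha^{(t)}$ then separates a \emph{drift} term $S_T^{-1}\sum_{s=1}^{T}\alpha^{(s)}Z_s$ from a \emph{noise} term $S_T^{-1}\sum_{s=1}^{T}\alpha^{(s)}\eta_s$.

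For the drift term, Theorem 2 gives $x^{(s)}\to x^{\ast}(\mu)$ almost surely; combined with continuity of the map $x\mapsto D_{KL}(f_j(y_j\mid x,\theta^{\ast})\|f_j(y_j\mid x,\theta_k))$ — which follows from continuity of $f_j(\cdot\mid x,\theta)$ in $x$, the uniform bound $L$ on the log-likelihood ratio, and Scheff\'e's lemma — this yields $Z_s\to Z(\theta^{\ast},\theta_k)$ a.s. Applying Lemma 5 (Toeplitz) pathwise with the positive weights $A_{Ts}=\alpha^{(s)}/S_T$, which sum to $1$ and satisfy $A_{Ts}\to0$ for each fixed $s$ because $S_T\to\infty$ under Assumption \ref{asm4}, gives $S_T^{-1}\sum_{s=1}^{T}\alpha^{(s)}Z_s\to Z(\theta^{\ast},\theta_k)$ a.s. For the noise term, $M_T:=\sum_{s=1}^{T}\alpha^{(s)}\eta_s$ is a zero-mean martingale whose conditional quadratic variation is at most $4L^2\sum_{s\ge1}(\alpha^{(s)})^2<\infty$, so by the martingale convergence theorem $M_T$ converges almost surely to a finite limit and hence $M_T/S_T\to0$ since $S_T\to\infty$. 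Adding the two limits proves the claim for the averaged log-ratio; to pass to an individual agent $i$ I would invoke Lemma 3, whose left-hand side tends to $0$ while $S_T\to\infty$, so $S_T^{-1}\big(\log\frac{\mu_i^{(T+1)}(\theta^{\ast})}{\mu_i^{(T+1)}(\theta_k)}-\frac1N\sum_j\log\frac{\mu_j^{(T+1)}(\theta^{\ast})}{\mu_j^{(T+1)}(\theta_k)}\big)\to0$.

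For the ``in particular'' assertion, fix $\theta_k\neq\theta^{\ast}$. Assumption \ref{asm2} provides an agent $j$ with $D_{KL}(f_j(y_j\mid x,\theta^{\ast})\|f_j(y_j\mid x,\theta_k))>0$ for every $x\in\mathcal{X}$; by continuity in $x$ and compactness of $\mathcal{X}$ this infimum is a strictly positive constant, so $Z(\theta^{\ast},\theta_k)>0$. Since $S_T^{-1}\log\frac{\mu_i^{(T+1)}(\theta^{\ast})}{\mu_i^{(T+1)}(\theta_k)}\to Z(\theta^{\ast},\theta_k)>0$ and $S_T\to\infty$, we get $\log\frac{\mu_i^{(T+1)}(\theta^{\ast})}{\mu_i^{(T+1)}(\theta_k)}\to+\infty$, i.e. $\mu_i^{(T+1)}(\theta_k)/\mu_i^{(T+1)}(\theta^{\ast})\to0$ a.s. Summing over the finitely many $\theta_k\neq\theta^{\ast}$ and using $\mu_i^{(T+1)}(\theta^{\ast})=\big(1+\sum_{\theta_k\neq\theta^{\ast}}\mu_i^{(T+1)}(\theta_k)/\mu_i^{(T+1)}(\theta^{\ast})\big)^{-1}$ then gives $\mu_i^{(T+1)}(\theta^{\ast})\to1$ a.s.

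The main obstacle is precisely the coupling flagged before the theorem: the realized costs $y^{(s)}$ are not i.i.d.\ over $s$, so a plain strong law is unavailable. The decomposition $\ell_s=Z_s+\eta_s$ is what rescues the argument — it isolates a path-dependent drift $Z_s$ that nevertheless converges, precisely because the strategies converge by Theorem 2, and is then handled by Toeplitz, while the martingale remainder is annihilated after normalization thanks to the $\ell^2$-summability of the step-sizes. A secondary technical point requiring care is the continuity in $x$ of the KL divergence, which amounts to justifying an interchange of limit and integral over $y_j$ using the uniform bound $L$.
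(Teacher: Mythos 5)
Your proposal is correct, but it follows a genuinely different route from the paper's. Both arguments start from the same telescoped identity for the averaged log-belief ratio and both use Lemma 3 (divided by $S_T\to\infty$) to pass from the network average to an individual agent, but the core of the argument differs. You perform a Doob-type decomposition $\ell_s = Z_s+\eta_s$ into the conditional mean $Z_s=\mathbb{E}[\ell_s\mid\mathcal{F}_s]$ (the KL divergence evaluated at the current strategy $x^{(s)}$) and a bounded martingale-difference remainder; the drift is handled by continuity of the KL divergence in $x$ together with $x^{(s)}\to x^{\ast}(\mu)$ and a single application of Toeplitz with weights $\alpha^{(s)}/S_T$, while the noise is killed by the $L^2$-martingale convergence theorem, which is where the condition $\sum_t(\alpha^{(t)})^2<\infty$ does real work. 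The paper instead constructs i.i.d.\ surrogates $\eta_j^{(t)}=(F_j^{\ast})^{-1}\bigl(F_j^{(t)}(z_j^{(t)})\bigr)$ via the probability integral transform, applies the strong law of large numbers to them, argues $z_j^{(t)}-\eta_j^{(t)}\to0$ from convergence in distribution, and then needs Toeplitz twice (once for the coupling error, once, via Abel summation with weights $t(\alpha^{(t)}-\alpha^{(t+1)})/s^{(T)}$, to convert the Ces\`aro average into the $\alpha$-weighted average). Your version is arguably cleaner and more in the spirit of stochastic approximation, avoids the delicate quantile-coupling step, and requires only one Toeplitz application; the paper's version, by working through $\frac1T\sum_t z_j^{(t)}$, additionally exhibits the explicit rate bound \eqref{convergence_rate}, which your argument recovers only qualitatively (your final paragraph gives the limit $\mu_i^{(t)}(\theta^{\ast})\to1$ but not the exponential envelope). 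The one point in your write-up that deserves the same scrutiny as the analogous step in the paper is the continuity of $x\mapsto D_{KL}(f_j(\cdot\mid x,\theta^{\ast})\,\|\,f_j(\cdot\mid x,\theta_k))$; your justification via the uniform bound $L$ on the log-likelihood ratio, pointwise continuity of $f_j$ in $x$, and Scheff\'e's lemma (i.e.\ a generalized dominated convergence argument with an $L^1$-convergent sequence of dominating densities) is adequate and in fact more explicit than what the paper offers.
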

\begin{proof}
	Similar to \eqref{log_belief_ratio_1}, we get
	\begin{align*}
		&\quad\lim_{T\to\infty}\frac{1}{\sum_{t=1}^T\alpha^{(t)}}\log\frac{\mu_i^{(T+1)}(\theta^{\ast})}{\mu_i^{(T+1)}(\theta_k)}\\
		&= \lim_{T\to\infty}\frac{1}{\sum_{t=1}^T\alpha^{(t)}}\sum_{j=1}^N\sum_{t=1}^TW^{t}(i,j)\alpha^{(T-t+1)}\log\frac{f_j(y_j^{(T-t+1)}|x^{(T-t+1)},\theta^{\ast})}{f_j(y_j^{T-t+1}|x^{(T-t+1)},\theta_k)}\\
		&= \lim_{T\to\infty}\frac{1}{\sum_{t=1}^T\alpha^{(t)}}\sum_{j=1}^N\sum_{t=1}^T\alpha^{(T-t+1)}(W^{t}(i,j) - \frac{1}{N})\log\frac{f_j(y_j^{(T-t+1)}|x^{(T-t+1)},\theta^{\ast})}{f_j(y_j^{(T-t+1)}|x^{(T-t +1)},\theta_k)}\\
		&\quad + \lim_{T\to\infty}\frac{1}{\sum_{t=1}^T\alpha^{(t)}}\frac{1}{N}\sum_{j=1}^N\sum_{t=1}^T\alpha^{(T-t+1)}\log\frac{f_j(y_j^{(T-t+1)}|x^{(T-t+1)},\theta^{\ast})}{f_j(y_j^{(T-t+1)}|x^{(T-t +1)},\theta_k)}.
	\end{align*}
	Note by \eqref{log_belief_ratio_1} and \eqref{average_log_belief_ratio} that
	\begin{align*}
		&\quad\lim_{T\to\infty}\frac{1}{\sum_{t=1}^T\alpha^{(t)}}\sum_{j=1}^N\sum_{t=1}^T\alpha^{(T-t+1)}(W^{t}(i,j) - \frac{1}{N})\log\frac{f_j(y_j^{(T-t+1)}|x^{(T-t+1)},\theta^{\ast})}{f_j(y_j^{(T-t+1)}|x^{(T-t +1)},\theta_k)}\\
		&= \lim_{T\to\infty}\frac{1}{\sum_{t=1}^T\alpha^{(t)}}\left(\log\frac{\mu_i^{(T+1)}(\theta^{\ast})}{\mu_i^{(T+1)}(\theta_k)} - \frac{1}{N}\sum_{i=1}^N\log\frac{\mu_i^{(T+1)}(\theta^{\ast})}{\mu_i^{(T+1)}(\theta_k)}\right)\\
		&= 0,
	\end{align*}
	where the last equality follows from Lemma 3 and Assumption \ref{asm4}. As a result, combining the above two relations and denoting by $z_j^{(t)}(\theta^{\ast},\theta_k) = \log\frac{f_j(y_j^{(t)}|x^{(t)},\theta^{\ast})}{f_j(y_j^{(t)}|x^{(t)},\theta_k)}$ and $s^{(T)} = \sum_{t=1}^T\alpha^{(t)}$, we further derive
	\begin{align}
		&\quad\ \lim_{T\to\infty}\frac{1}{\sum_{t=1}^T\alpha^{(t)}}\log\frac{\mu_i^{(T+1)}(\theta^{\ast})}{\mu_i^{(T+1)}(\theta_k)}\notag\\
		&= \lim_{T\to\infty}\frac{1}{\sum_{t=1}^T\alpha^{(t)}}\frac{1}{N}\sum_{j=1}^N\sum_{t=1}^T\alpha^{(T-t+1)}\log\frac{f_j(y_j^{(T-t+1)}|x^{(T-t+1)},\theta^{\ast})}{f_j(y_j^{(T-t+1)}|x^{(T-t +1)},\theta_k)}\notag\\
		&= \frac{1}{N}\sum_{j=1}^N\lim_{T\to\infty}\frac{1}{s^{(T)}}\sum_{t=1}^T\alpha^{{(t)}}z_j^{(t)}(\theta^{\ast},\theta_k)\label{weighted_sum}
	\end{align}
	To consider the convergence of the weighted average of the random variables $z_j^{(t)}(\theta^{\ast},\theta_k)$, we first study the convergence of $\frac{1}{T}\sum_{t=1}^Tz_j^{(t)}(\theta^{\ast},\theta_k)$. By Theorem 2, $x^{(t)}\to x^{\ast}(\mu)$ almost surely. We define the following cumulative distribution functions
	\[F_j^{(t)}(z)\triangleq P\left\{z_j^{(t)}(\theta^{\ast},\theta_k)\le z\right\},\quad F_j^{\ast}(z)\triangleq P\left\{\log\frac{f_j(y_j|x^{\ast}(\mu),\theta^{\ast})}{f_j(y_j|x^{\ast}(\mu),\theta_k)}\le z\right\}.\]
	We establish the relationship between $F_j^{(t)}(z)$ and $F_j^{\ast}(z)$ in two steps.\\
	{\bf Step 1}: From $F_j^{(t)}(z)$ to uniform distribution on $[0,1]$\\
	\indent Define
	\[\Delta_j^{(t)}\triangleq F_j^{(t)}\left(z_j^{(t)}(\theta^{\ast},\theta_k)\right).\]
	\indent Then, $\Delta_j^{(t)}\in[0,1]$ and for any $\delta\in[0,1]$,
	\[P\{\Delta_j^{(t)}\le\delta\} = P\left\{z_j^{(t)}(\theta^{\ast},\theta_k)\le (F_j^{(t)})^{-1}(\delta)\right\} = F_j^{(t)}(F_j^{(t)})^{-1}(\delta) = \delta.\]
	\indent Thus, $\Delta_j^{(t)}$ has a uniform distribution on $[0,1]$ for all $t$.\\
	{\bf Step 2}: From uniform distribution on $[0,1]$ to $F_j^{\ast}(z)$\\
	\indent Define
	\[\eta_j^{(t)}\triangleq (F_j^{\ast})^{-1}(\Delta_j^{(t)}).\]
	\indent Since $\Delta_j^{(t)}$ has a uniform distribution, we get 
	\[P\{\eta_j^{(t)}\le z\} = P\{\Delta_j^{(t)}\le F_j^{\ast}(z)\} = F_j^{\ast}(z).\]
	\indent Note that conditioned on the action profiles, the sequence $\{z_j^{(t)}(\theta^{\ast},\theta_k)\}$ is independent over $t$. Therefore, the sequence $\{\eta_j^{(t)}\}_{t\ge 1}$ is independent and identically distributed over $t$ and has the cumulative distribution function $F_j^{\ast}(z)$. Furthermore, by the strong law of large numbers,
	\begin{equation}\label{strong_law}
		\lim_{T\to\infty}\frac{1}{T}\sum_{t=1}^T\eta_j^{(t)} = \mathbb{E}\left[\log\frac{f_j(y_j|x^{\ast}(\mu),\theta^{\ast})}{f_j(y_j|x^{\ast}(\mu),\theta_k)}\right]\quad a.s.
	\end{equation}
	
	On the other hand, since $x^{(t)}\to x^{\ast}(\mu)\ a.s.$, we get by the continuity of the likelihood function (Assumption 1) that $z_j^{(t)}(\theta^{\ast},\theta_k)\to \log\frac{f_j(y_j|x^{\ast}(\mu),\theta^{\ast})}{f_j(y_j|x^{\ast}(\mu),\theta_k)}$, a.s. Thus, the convergence also holds in distribution, i.e., for all $z\in\mathbb{R}$,
	\[\lim_{t\to\infty}F_j^{(t)}\left(z\right) = \lim_{t\to\infty}F_j^{\ast}\left(z\right).\]
	Moreover, since the set of discontinuous points of a monotone function is at most countable \cite[Theorem 4.30]{rudin1976principles}, we get
	\begin{equation}\label{strong_law_error}
		\lim_{t\to\infty}\left(z_j^{(t)}(\theta^{\ast},\theta_k) - \eta_j^{(t)}\right) = \lim_{t\to\infty}\left(z_j^{(t)}(\theta^{\ast},\theta_k) - (F_j^{\ast})^{-1}F_j^{(t)}\left(z_j^{(t)}(\theta^{\ast},\theta_k)\right)\right) = 0\quad a.s.
	\end{equation} 
	By \eqref{strong_law}-\eqref{strong_law_error} and using Toeplitz's lemma (Lemma 4) with $A_{Tt} = \frac{1}{T}$ ($t = 1,\dots,T$) and $Y_t = z_j^{(t)}(\theta^{\ast},\theta_k) - \eta_j^{(t)}$, we obtain
	\begin{equation}
		\lim_{T\to\infty}\frac{1}{T}\sum_{t=1}^Tz_j^{(t)}(\theta^{\ast},\theta_k) = \mathbb{E}\left[\log\frac{f_j(y_j|x^{\ast}(\mu),\theta^{\ast})}{f_j(y_j|x^{\ast}(\mu),\theta_k)}\right]\quad a.s.
	\end{equation}
	
	Next, we associate the weighted sum \eqref{weighted_sum} with $\lim_{T\to\infty}\frac{1}{T}\sum_{t=1}^Tz_j^{(t)}(\theta^{\ast},\theta_k)$. From \eqref{weighted_sum}, we further derive 
	\begin{align*}
		&\quad\ \lim_{T\to\infty}\frac{1}{\sum_{t=1}^T\alpha^{(t)}}\log\frac{\mu_i^{(T+1)}(\theta^{\ast})}{\mu_i^{(T+1)}(\theta_k)}\\
		&= \frac{1}{N}\sum_{j=1}^N\lim_{T\to\infty}\frac{1}{s^{(T)}}\left(T\alpha^{(T)}\cdot\frac{1}{T}\sum_{t=1}^Tz_j^{(t)}(\theta^{\ast},\theta_k) + \sum_{t=1}^{T-1}t(\alpha^{(t)} - \alpha^{{(t+1)}})\cdot\frac{1}{t}\sum_{\tau=1}^tz_j^{(\tau)}(\theta^{\ast},\theta_k)\right).
	\end{align*}
	Notice that $T\alpha^{(T)} + \sum_{t=1}^{T-1}t(\alpha^{(t)} - \alpha^{(t+1)}) = s^{(T)}$. As a result, by $s^{(T)}\to\infty$ ($T\to\infty$), applying Toeplitz's lemma again with $A_{Tt} = \frac{t(\alpha^{(t)} - \alpha^{(t+1)})}{s^{(T)}}$ ($t = 1,\dots,T-1$), $A_{TT} = \frac{T\alpha^{(T)}}{s^{(T)}}$, and $Y_t = \frac{1}{t}\sum_{\tau=1}^tz_j^{(\tau)}(\theta^{\ast},\theta_k)$ yields
	\begin{align}
		\lim_{T\to\infty}\frac{1}{\sum_{t=1}^T\alpha^{(t)}}\log\frac{\mu_i^{(T+1)}(\theta^{\ast})}{\mu_i^{(T+1)}(\theta_k)}
		&= \frac{1}{N}\sum_{j=1}^N\lim_{T\to\infty}\frac{1}{T}\sum_{t=1}^Tz_j^{(t)}(\theta^{\ast},\theta_k)\notag\\
		&= \mathbb{E}\left[\frac{1}{N}\sum_{j=1}^N\log\frac{f_j(y_j|x^{\ast}(\mu),\theta^{\ast})}{f_j(y_j|x^{\ast}(\mu),\theta_k)}\right]\quad a.s. \label{Toeplitz}
	\end{align}
	
	Recall that $Z(\theta^{\ast},\theta_k)\triangleq \frac{1}{N}\sum_{j=1}^ND_{KL}(f_j(y_j|x^{\ast}(\mu),\theta^{\ast})\|f_j(y_j|x^{\ast}(\mu),\theta_k))$. From the definition \eqref{KL-div_def} of KL divergence,
	\[Z(\theta^{\ast},\theta_k) = \mathbb{E}\left[\frac{1}{N}\sum_{j=1}^N\log\frac{f_j(y_j|x^{\ast}(\mu),\theta^{\ast})}{f_j(y_j|x^{\ast}(\mu),\theta_k)}\right].\]
	By Assumption \ref{asm2}, $Z(\theta^{\ast},\theta_k)$ is strictly positive. Additionally, from \eqref{Toeplitz}, for all $\epsilon > 0$, there exists $T'$ such that for all $T\ge T'$,
	\[\left|\frac{1}{s^{(T)}}\log\frac{\mu_i^{(T+1)}(\theta^{\ast})}{\mu_i^{(T+1)}(\theta_k)} - Z(\theta^{\ast},\theta_k)\right|\le\epsilon\quad a.s.\]
	Therefore,
	\[\frac{\mu_i^{(T+1)}(\theta_k)}{\mu_i^{(T+1)}(\theta^{\ast})} \le \exp(-s^{(T)}(Z(\theta^{\ast},\theta_k) - \epsilon))\quad a.s.\]
	Using $\sum_{k=1}^M\mu_i^{(T+1)}(\theta_k) = 1$, we get
	\[\frac{1}{\mu_i^{(T+1)}(\theta^{\ast})} - 1\le \sum_{\theta_k\neq\theta^{\ast}}\exp(-s^{(T)}(Z(\theta^{\ast},\theta_k) - \epsilon))\quad a.s.\]
	Furthermore, we derive
	\begin{equation}\label{convergence_rate}
		\frac{1}{1 + \sum_{\theta_k\neq\theta^{\ast}}\exp(-s^{(T)}(Z(\theta^{\ast},\theta_k) - \epsilon))}\le \mu_i^{(T+1)}(\theta^{\ast})\le 1\quad a.s.
	\end{equation}
	Hence we have that $\mu_i^{(t)}(\theta^{\ast})\to 1\ a.s.$, and the assertion is proved.
\end{proof}
Theorem 3 establishes the convergence of the beliefs to the true parameter and the rate is given by \eqref{convergence_rate}. Combining Theorem 3 and Theorem 2, we further obtain that the strategy profile converges to the true Nash equilibrium. 
\begin{remark}
	The equation \eqref{convergence_rate} shows that the convergence rate depends on the step-size sequence $\{\alpha^{(t)}\}$. Combined with Theorem 1 in \cite{eksin2017distributed}, we know that if we choose an appropriate sequence $\{\alpha^{(t)}\}$ such that $\mu_i^{(t)}$ converges to $\mu$ at a rate faster than $O(\frac{\log t}{t})$, agents may replace the best response strategies with a distributed fictitious play scheme. 
\end{remark}
\section{Conclusion}
In this paper, we studied a Nash equilibrium seeking problem in games with a cost-relevant parameter. In order to learn the true parameter while finding the Nash equilibrium, we introduced a diminishing step-size sequence on the basis of the traditional non-Bayesian learning rule to ensure that the beliefs of agents on the parameter reach consensus. Then we combined the best response dynamics to update the strategies, and thus, proposed a novel distributed non-Bayesian rule. Using the strong law of large numbers and Toeplitz's lemma, we showed that agents' beliefs concentrate on the true parameter and the strategy profile converges to the true Nash equilibrium.  
\bibliographystyle{IEEEtran}
\bibliography{non_Bayesian_learning.bib}
\end{document}